
\documentclass[11pt,reqno]{amsart}

\usepackage{amsmath, amsfonts, amssymb,  mathrsfs,  array, stmaryrd,  indentfirst, amsthm,  hyperref, comment}
\usepackage{graphicx, enumitem, tabularx, color}

\usepackage[margin=1.0in]{geometry}
\usepackage{setspace}
\onehalfspacing

\setlength{\parskip}{8pt}

\numberwithin{equation}{section}
\theoremstyle{plain}
\newtheorem{lemma}{Lemma}[section]
\newtheorem{thm}{Theorem}[section]
\newtheorem{defn}{Definition}[section]
\newtheorem{prop}{Proposition}[section]

\newtheorem{remark}{Remark}[section]
\newcommand{\lbar}[1]{\underline{#1}}
\newcommand{\ubar}[1]{\overline{#1}}

\newcommand{\bfDelta}{\mathbf{\Delta}}

\makeatletter
\def\@setcopyright{}
\def\serieslogo@{}
\makeatother

\begin{document}

\title[]{Stochastic Perron's Method for the Probability of lifetime ruin problem under transaction costs}\thanks{This research is supported by the National Science Foundation under grant  DMS-0955463.}

\author{Erhan Bayraktar}
\address[Erhan Bayraktar]{Department of Mathematics, University of Michigan, 530 Church Street, Ann Arbor, MI 48109, USA}
\email{erhan@umich.edu}
\author{Yuchong Zhang }
\address[Yuchong Zhang]{Department of Mathematics, University of Michigan, 530 Church Street, Ann Arbor, MI 48109, USA}
\email{yuchong@umich.edu}

\begin{abstract}
We apply stochastic Perron's method to a singular control problem where an individual targets at a given consumption rate, invests in a risky financial market in which trading is subject to proportional transaction costs, and seeks to minimize her probability of lifetime ruin. Without relying on the dynamic programming principle (DPP), we characterize the value function as the unique viscosity solution of an associated Hamilton-Jacobi-Bellman (HJB) variational inequality. We also provide a complete proof of the comparison principle which is the main assumption of stochastic Perron's method.
\end{abstract}
\keywords{Stochastic Perron's method, singular control, probability of lifetime ruin, transaction costs, viscosity solutions, comparison principle.}

\maketitle
\pagestyle{headings}

\section{Introduction}
Stochastic Perron's method is introduced in \cite{BayraktarSirbu12}, \cite{BayraktarSirbu14} and \cite{BayraktarSirbu13} as a way to show the value function of a stochastic control problem is the unique viscosity solution of the associated Hamilton-Jacobi-Bellman (HJB) equation, without having to first go through the proof of the dynamic programming principle (DPP) which is usually very long and complicated, and often incomplete. It is a direct verification approach in that it first constructs a solution to the HJB equation, and then verifies such a solution is the value function. But unlike the classical verification, it does not require regularity; uniqueness acts as a substitute for verification.
The basic idea is to define, for each specific problem, a suitable family of stochastic supersolutions $\mathcal{V}^+$ (resp. stochastic subsolutions $\mathcal{V}^-$) which is stable under minimum (resp. maximum), and whose members bound the value function from above (resp. below). So the value function is enveloped from above by $v_+=\inf_{v\in\mathcal{V}^+} v$ and from below by $v_-=\sup_{v\in\mathcal{V}^-} v$. The key step is to show $v_+$ is a viscosity subsolution and $v_-$ is a viscosity supersolution by a Perron-type argument. A comparison principle then closes the gap. 

Stochastic Perron's method has been applied to linear problems \cite{BayraktarSirbu12}, Dynkin games \cite{BayraktarSirbu14}, HJB equations for regular control problems \cite{BayraktarSirbu13}, (regular) exit time problems \cite{Rokhlin14a} and zero-sum differential games \cite{Sirbu14}. This paper adapts the method to another type of problems: singular control problems. In particular, we focus on the specific problem of how individuals should invest their wealth in a risky financial market to minimize the probability of lifetime ruin, when buying and selling of the risky asset incur proportional transaction costs. This problem can also be treated as an exit time problem, but with singular controls. In the frictionless case, the probability of lifetime ruin problem was analyzed by Young \cite{Young04}, and later studied in more complicated settings such as borrowing constraints \cite{BayraktarYoung07b}, stochastic consumption \cite{BayraktarYoung11} and drift uncertainty \cite{Yuchong14}. So the goal of the paper is two-fold. First, it exemplifies how stochastic Perron's method can be applied to singular control problems, which has not been covered in the literature. Second, it serves as the first step towards a rigorous analysis of the probability of lifetime ruin problem under transaction costs. The techniques in this paper can be applied in a similar way to other optimal investment problems under transaction costs, as long as there is a comparison principle. For consumption-investment problems, uniqueness is proved in \cite{Kabanov04} under certain conditions (also see \cite[Theorem 1]{TZ95} and Section 4.3 of \cite{Kabanov}).

The main idea of the proof is in line with \cite{BayraktarSirbu13} and \cite{Rokhlin14a}, but there are some nontrivial modifications. Similar to \cite{DavisNorman90} and \cite{ShreveSoner94}, our HJB equation takes the form of a variational inequality with three components, one for each of the three different regions: no-transaction, sell, and buy. This makes the proof of the interior viscosity subsolution property of the upper stochastic envelope $v_+$ more demanding: we have to argue by contradiction in three cases separately. Variational inequalities also appear in \cite{BayraktarSirbu14} and the authors are able to rule out some of the cases by assuming the existence of a stochastic supersolution (resp. subsolution) less than or equal to the upper obstacle (resp. greater than or equal to the lower obstacle). But the same idea does not work for gradient constraints. Another challenge posed by the singular control is that the state process can jump outside the small neighborhood in which local estimates obtained from the viscosity solution property are valid. This issue arises in the proof of the interior viscosity supersolution property of the lower stochastic envelope $v_-$, and we overcome it by splitting the jump into two steps: first to an intermediate point on the boundary of the neighborhood and then to its original destination. 

In proving the viscosity semi-solution property of $v_{\pm}$, boundary property is usually harder to show than interior property. In fact, most of the work in \cite{Rokhlin14a} is devoted to proving the boundary viscosity semi-solution property of $v_{\pm}$. In our case, we avoid this hassle by constructing explicitly a stochastic supersolution and a stochastic subsolution
both of which satisfy the boundary condition. The boundary viscosity semi-solution property then becomes a trivial consequence of the definition of $v_\pm$. This is very similar to classical Perron's method in which one has to first come up with a pair of viscosity semi-solutions satisfying the boundary condition (see Theorem 4.1 and Example 4.6 of \cite{UsersGuide}). However, we point out that the construction of such stochastic semi-solutions depends on the specific problem at hand and may not always be possible.

Previous works on stochastic Perron's method focus on methodology and take comparison principle (which is crucial for stochastic Perron's method to work) as an assumption. Here we provide, in addition to stochastic Perron's method, a complete proof of the comparison principle for our specific singular control problem. The proof relies on the existence of a strict classical subsolution satisfying certain growth condition, an idea we borrowed from \cite{Kabanov04}.

The rest of the paper is organized as follows. In Section~\ref{sec:formulation}, we set up the problem, derive the HJB equation and some bounds on the value function, and state the main theorem. In Section~\ref{sec:ssupsoln}, we introduce the notion of stochastic supersolution and show the infimum of stochastic supersolutions is a viscosity subsolution. In Section~\ref{sec:ssubsoln}, we introduce the notion of stochastic subsolution and show the supremum of stochastic subsolutions is a viscosity supersolution. Finally, in Section~\ref{sec:comparison} we prove a comparison principle
and finish the proof of the main theorem.

\section{Problem formulation}\label{sec:formulation}
Let $(\Omega, \mathcal{F},\mathbb{P})$ be a probability space supporting a Brownian motion $W=(W_t)_{t\geq 0}$ and an independent Poisson process $N=(N_t)_{t\geq 0}$ with rate $\beta$. Let $\tau_d$ be the first time that the Poisson process jumps, modeling the death time of the individual. $\tau_d$ is exponentially distributed with rate $\beta$, known as the hazard rate in this context. Denote by $\mathbb{F}:=\{\mathcal{F}_t\}_{t\geq 0}$ the completion of the natural filtration of the Brownian motion and $\mathbb{G}:=\{\mathcal{G}_t\}_{t\geq 0}$ the completion of the filtration generated by $W$ and the process $1_{\{t\geq\tau_d\}}$. Assume both $\mathcal{F}$ and $\mathcal{G}$ have been made right continuous; that is, they satisfy the usual condition.

The financial market consists of a risk-free money market with interest rate $r>0$ and a risky asset (a stock) whose price $P_t$ follows a geometric Brownian motion with drift $\alpha>r$ and volatility $\sigma>0$. Transferring assets between the money market and the stock market incur proportional transaction costs specified by two parameters $\lambda, \mu\in(0,1)$. One can think of the stock as having ask price $P_t/(1-\lambda)$ and bid price $(1-\mu)P_t$.
Same as \cite{ShreveSoner94}, we describe the investment policy of the individual by a pair $(B,S)$ of right-continuous with left limits (RCLL), non-negative, non-decreasing and $\mathbb{G}$-adapted processes, where $B$ records the cumulative amount of money withdrawn from the money market for the purpose of buying stock, and $S$ records the cumulative sales of stock for the purpose of investment in the money market. We set $(B_{0-},S_{0-})=\mathbf{0}$, i.e. there is no investment history at time zero. Due to transaction costs, it is never optimal to buy and sell at the same time. So we limit ourselves to strategies $(B,S)$ such that for all $t$, $\triangle B_t:=B_t-B_{t-}$ and $\triangle S_t:=S_t-S_{t-}$ are not both strictly positive. Denote by $\mathscr{A}_0$ the set of all such pairs $(B,S)$.
Apart from investment, the individual also consumes at a constant rate $c>0$. 

Denote by $X_t$ and $Y_t$ the total dollar amount invested in the money market and the stock at time $t$, respectively. Let $L(x,y):=x+(1-\mu)y^+- \frac{1}{1-\lambda}y^-$ be the liquidation function. For each $a\in\mathbb{R}$, define
\[\mathcal{S}_a:=\{(x,y)\in\mathbb{R}^2:L(x,y)>a\}=\{(x,y)\in\mathbb{R}^2: x+\frac{y}{1-\lambda}>a, x+(1-\mu)y>a\}.\]
Given initial endowment $(x,y)$ and a pair of control $(B,S)\in\mathscr{A}_0$, the pre-death investment position of the individual evolve according to the stochastic differential equations (SDE)
\begin{align}
&dX_t=(rX_{t}-c)dt-dB_t+(1-\mu)dS_t, \quad X_{0-}=x, \label{X}\\
&dY_t=\alpha Y_t dt+\sigma Y_t dW_t+(1-\lambda)dB_t-dS_t, \quad Y_{0-}=y. \label{Y}
\end{align}
Here we allow an immediate transaction at time zero so that $(X_0, Y_0)$ may differ from $(x,y)$. Denote the solution by $(X^{x,y,B,S}, Y^{x,y,B,S})$. Let
\[\tau_b^{x,y,B,S}:=\inf\{t\geq 0: (X^{x,y,B,S},Y^{x,y,B,S})\notin S_b\}\]
be the ruin time. The individual aims at minimizing the probability that ruin happens before death. The value function of this control problem is defined as
\begin{equation}
\psi(x,y):=\inf_{(B,S)\in\mathscr{A}_0}\mathbb{P}(\tau_b^{x,y,B,S}<\tau_d).
\end{equation}

Clearly, $\psi$ is $[0,1]$-valued, and $\psi(x,y)=1$ if $(x,y)\notin \mathcal{S}_b$. Same as in the frictionless case, when $L(x,y)\geq c/r$, the individual can sustain her consumption by immediately putting all her money in the money market and consuming the interest. We shall assume $b<c/r$, otherwise the problem is trivial.\footnote{If $b\geq c/r$, then $\psi(x,y)$ is either 0 or 1, depending on whether $(x,y)$ belongs to $\mathcal{S}_b$ or not.} We have $\psi(x,y)=0$ for $(x,y)\in \ubar{\mathcal{S}}_{c/r}$. In other words, $\ubar{\mathcal{S}}_{c/r}$ is a ``safe region". The (open) state space for this control problem is $\mathcal{S}:=\mathcal{S}_{b}\backslash \ubar{\mathcal{S}}_{c/r}$, and the boundary consists of two parts: the ruin level $\partial\mathcal{S}_b$ and the safe level $\partial\mathcal{S}_{c/r}$.

For $\varphi\in C^2(\mathcal{S})$, define
\[\mathcal{L}\varphi:=\beta \varphi-(rx-c)\varphi_x-\alpha y \varphi_y-\frac{1}{2}\sigma^2 y^2\varphi_{yy}.\]
The HJB equation for the frictional lifetime ruin problem is
\begin{equation}\label{HJB}
\max\left\{\mathcal{L}u, -(1-\mu)u_x+u_y, u_x-(1-\lambda)u_y\right\}=0, \quad (x,y)\in \mathcal{S},
\end{equation}
with boundary conditions
\begin{equation}\label{BC}
u(x,y)=1 \text{ if } (x,y)\in\partial \mathcal{S}_b, \quad u(x,y)=0 \text{ if } (x,y)\in\partial \mathcal{S}_{c/r}.
\end{equation}

\subsection{Upper and lower bounds on the value function}
Let
\begin{equation}\label{upperbdd}
\ubar{\psi}(x,y):=\left(\frac{c-rL(x,y)}{c-rb}\right)^{\frac{\beta}{r}}, \quad (x,y)\in\ubar{\mathcal{S}}.
\end{equation}
$\ubar{\psi}$ is the probability of ruin if the agent immediately liquidate her stock position and makes no further transaction throughout her lifetime. It is an upper bound for the value function since such a strategy may not be optimal. It is easy to see that $\ubar{\psi}$ satisfies the boundary conditions \eqref{BC}. 

For $k\in[1-\mu, \frac{1}{1-\lambda}]$, let
\begin{equation}\label{eq:psi_k}
\psi_k(x,y):= 
\begin{cases}
\left(\frac{c-r(x+ky)}{c-rb}\right)^{d} , & b\leq x+ky\leq c/r,\\
0, & x+ky> c/r.
\end{cases}
\end{equation}
where 
\begin{equation}\label{dR}
d=\frac{1}{2r}\left[(r+\beta+R)+\sqrt{(r+\beta+R)^2-4r\beta}\right]>1, \quad R=\frac{1}{2}\left(\frac{\alpha-r}{\sigma}\right)^2.
\end{equation}
That is, $\psi_k(x,y)$ is the minimum frictionless probability of ruin when the initial wealth is $x+ky$ (the frictionless ruin probability is derived in \cite{Young04}). $\psi_k$ bounds the frictional value function from below because each $k$ corresponds to a stock price inside the bid-ask spread, and trading at a more favorable frictionless price obviously leads to smaller ruin probability. For a rigorous proof, one can refer to Remark \ref{rmk:ssubsoln<=psi} and Lemma \ref{lemma:lbarpsi is a ssubsoln}. Since the value function $\psi$ is bounded from below by $\psi_k$ for each $k$, it is bounded from below by their supremum:
\begin{equation}\label{lowerbdd}
\lbar{\psi}(x,y):=\sup_{k\in[1-\mu,\frac{1}{1-\lambda}]} \psi_k(x,y)=\psi_{1-\mu}(x,y)\vee \psi_{\frac{1}{1-\lambda}}(x,y)=\left(\frac{c-rL(x,y)}{c-rb}\right)^{d}.
\end{equation}
Since $\psi_k$ is continuous in $k$, the above supremum remains unchanged if we replace $[1-\mu,\frac{1}{1-\lambda}]$ by $(1-\mu,\frac{1}{1-\lambda})\cap\mathbb{Q}$. Clearly, $\lbar{\psi}$ satisfies the boundary conditions \eqref{BC}. 

The following lemma summarizes the results.
\begin{lemma}
For $(x,y)\in\ubar{\mathcal{S}}$,
\[\left(\frac{c-rL(x,y)}{c-rb}\right)^{d}\leq\psi(x,y)\leq \left(\frac{c-rL(x,y)}{c-rb}\right)^{\frac{\beta}{r}},\]
where $d$ is defined in \eqref{dR}.
\end{lemma}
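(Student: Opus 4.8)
The plan is to prove the two inequalities by different means: the right-hand (upper) bound by exhibiting one explicit admissible strategy, and the left-hand (lower) bound by a pathwise domination argument reducing to the frictionless lifetime-ruin problem solved in \cite{Young04}. For the upper bound I would use the strategy $(B,S)\in\mathscr{A}_0$ that liquidates the stock position at time $0$ and never trades afterwards: if $y\ge 0$, set $\triangle S_0=y$ and $B\equiv 0$; if $y<0$, set $\triangle B_0=-y/(1-\lambda)$ and $S\equiv 0$; in either case $(X_0,Y_0)=(L(x,y),0)$ and $B,S$ stay constant thereafter. Then $Y_t\equiv 0$, and \eqref{X} gives $X_t=c/r+(L(x,y)-c/r)e^{rt}$, which is non-increasing (since $b\le L(x,y)\le c/r$ on $\ubar{\mathcal{S}}$) and reaches $b$ at the deterministic time $\tau_b=\frac1r\log\frac{c-rb}{c-rL(x,y)}\in[0,\infty]$, with $L(X_t,Y_t)=X_t>b$ for $t<\tau_b$. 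Hence $\tau_b^{x,y,B,S}=\tau_b$ is deterministic, and since $\tau_d$ is exponential with rate $\beta$ and independent of $\mathbb{F}$,
\[\mathbb{P}(\tau_b^{x,y,B,S}<\tau_d)=e^{-\beta\tau_b}=\left(\frac{c-rL(x,y)}{c-rb}\right)^{\beta/r}=\ubar{\psi}(x,y);\]
as this strategy need not be optimal, $\psi(x,y)\le\ubar{\psi}(x,y)$.

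For the lower bound I would fix $k\in[1-\mu,\frac{1}{1-\lambda}]$ and an arbitrary $(B,S)\in\mathscr{A}_0$, write $(X,Y)$ for the corresponding state process, and set $Z_t:=X_t+kY_t$. Using \eqref{X}--\eqref{Y} and the identity $rZ_t+(\alpha-r)kY_t=rX_t+\alpha kY_t$ one finds
\[dZ_t=(rZ_t-c)\,dt+(\alpha-r)(kY_t)\,dt+\sigma(kY_t)\,dW_t-dA_t,\qquad Z_{0-}=x+ky,\]
with $dA_t=(1-k(1-\lambda))\,dB_t+(k-(1-\mu))\,dS_t\ge 0$ precisely because $1-\mu\le k\le\frac{1}{1-\lambda}$; in particular $Z_0\le x+ky$. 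So $Z$ is the wealth of a frictionless investor holding $\pi_t:=kY_t$ dollars in the stock and consuming at rate $c$, net of the non-decreasing ``withdrawal'' $A$, and a linear-SDE comparison gives $Z_t\le\hat Z_t$, where $\hat Z$ is the frictionless wealth with the same $\pi$ and consumption, no withdrawals, and $\hat Z_0=Z_0\le x+ky$. The same bounds on $k$ give $L(X_t,Y_t)\le X_t+kY_t=Z_t$ for all $t$ (checking $Y_t\ge 0$ and $Y_t<0$ separately), so frictional ruin occurs no later than the first time $Z$ drops to $b$, hence no later than $\hat\tau:=\inf\{t:\hat Z_t\le b\}$, whence $\{\hat\tau<\tau_d\}\subseteq\{\tau_b^{x,y,B,S}<\tau_d\}$. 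Writing $\psi^{\mathrm{fl}}(w)$ for the minimal lifetime-ruin probability in the frictionless market with initial wealth $w$ — a non-increasing function of $w$, given explicitly in \cite{Young04}, which equals $\psi_k(x,y)$ at $w=x+ky$ since $x+ky\ge L(x,y)\ge b$ on $\ubar{\mathcal{S}}$ — and using that $\pi=kY$ is an admissible frictionless investment strategy, I get
\[\mathbb{P}(\tau_b^{x,y,B,S}<\tau_d)\ \ge\ \mathbb{P}(\hat\tau<\tau_d)\ \ge\ \psi^{\mathrm{fl}}(\hat Z_0)\ \ge\ \psi^{\mathrm{fl}}(x+ky)\ =\ \psi_k(x,y).\]
Taking the infimum over $(B,S)$, then the supremum over $k$, and invoking \eqref{lowerbdd} gives $\psi(x,y)\ge\lbar{\psi}(x,y)=\left(\frac{c-rL(x,y)}{c-rb}\right)^{d}$.

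The first part is routine SDE bookkeeping; the substantive input is Young's explicit solution and the fact that it is the infimum over all admissible frictionless investment strategies, which is what allows passing from $\hat Z$ to its optimally controlled analogue. I therefore expect the one delicate point to be verifying that $\pi=kY$ lies in the admissible class for that problem (local square-integrability and measurability with respect to the filtration used there, the latter harmless since $\tau_d$ is independent of $W$). Alternatively — the route the paper ultimately takes — once Section~\ref{sec:ssubsoln} is available the lower bound follows immediately from the fact that each $\psi_k$ is a stochastic subsolution (Lemma~\ref{lemma:lbarpsi is a ssubsoln}), hence lies below $\psi$ (Remark~\ref{rmk:ssubsoln<=psi}); the argument above merely makes the informal reasoning ``a more favorable frictionless price lowers the ruin probability'' in the text rigorous.
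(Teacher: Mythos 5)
Your proof is correct, and it takes a route that is related to but genuinely distinct from what the paper formally does. The upper bound matches the paper's argument exactly: the liquidation strategy gives a deterministic ruin time $\tau_b$, and $e^{-\beta\tau_b}=\ubar{\psi}(x,y)$. For the lower bound, the paper's statement of the lemma is not self-contained: the text explains the intuition (a frictionless trader at a price $k$ inside the bid--ask spread fares at least as well) and then defers the rigorous argument to Remark~\ref{rmk:ssubsoln<=psi} together with Lemma~\ref{lemma:lbarpsi is a ssubsoln}, i.e.\ to the stochastic-subsolution machinery of Section~\ref{sec:ssubsoln}. You instead make the intuitive sketch rigorous directly: writing $Z=X+kY$ you show $dZ$ is a frictionless wealth dynamic with portfolio $\pi=kY$ minus a nondecreasing withdrawal $A$, so $Z\le\hat Z$, and since $L(X,Y)\le Z$ you reduce to the frictionless ruin problem of Young. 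Both your argument and the paper's Lemma~\ref{lemma:lbarpsi is a ssubsoln} are built on the same core identity for $Z_t$ and the same sign condition on $dA$; the difference is that the paper then runs It\^o's formula against the explicit frictionless HJB inside the subsolution framework, whereas you invoke the frictionless value function's optimality directly. Your version is shorter but, as you correctly flag, outsources a step: one must check that $\pi=kY$ is admissible in the frictionless problem and that Young's value function bounds the ruin probability of any $\mathbb{G}$-adapted $\pi$ (not just $\mathbb{F}$-adapted ones); the paper avoids this by re-running the It\^o/HJB argument from scratch. Two tiny technical remarks: (a) you should extend $\psi^{\mathrm{fl}}$ by $1$ below $b$ so that the chain of inequalities covers the case where a time-$0$ transaction pushes $\hat Z_0$ below $b$; (b) the paper's Lemma~\ref{lemma:lbarpsi is a ssubsoln} restricts to $k$ in the open interval to keep $Y$ bounded and recovers the endpoints by continuity of $\psi_k$ in $k$---your argument appears to work at the endpoints too, but the integrability of $\pi=kY$ is exactly where that might bite, so it is safer to mirror the paper's restriction and take the supremum over the open interval.
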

\begin{remark}
It can be shown that $\ubar{\psi}$ is a viscosity supersolution and $\lbar{\psi}$ is a viscosity subsolution of \eqref{HJB}. With a comparison principle which we will prove in Section~\ref{sec:comparison}, one can use (classical) Perron's method introduced by Ishii  \cite{Ishii87} (also described in \cite{UsersGuide}) to get the existence of a viscosity solution to \eqref{HJB}, \eqref{BC}. But such a solution cannot be compared with the value function unless one can prove regularity which is necessary for the classical verification theorem. Instead, we will use stochastic Perron's method which amounts to verification without smoothness.
\end{remark}

\subsection{Random initial condition and admissible controls}
For convenience in later discussion, we introduce a ``coffin state" $\bfDelta$. Let $\ubar{\mathcal{S}}\cup \bfDelta$ be the one point compactification of $\ubar{\mathcal{S}}$. Throughout this paper, all closures are taken in $\mathbb{R}^2$. For any $\mathbb{R}^2$-valued vector $z$, we use the convention that $\bfDelta+z=\bfDelta$. Set $(X_t, Y_t):=\bfDelta$ for all $t\geq \tau_d$. 
For any function $u$ defined on $\ubar{\mathcal{S}}$, define its extension to $\ubar{\mathcal{S}}\cup \{\bfDelta\}$ by assigning $u(\bfDelta)=0$. 

A pair $(\tau,\xi)$ is called a \textit{random initial condition} for \eqref{X}, \eqref{Y} if $\tau$ is a $\mathbb{G}$-stopping time taking values in $[0,\tau_d]$, $\xi=(\xi^0,\xi^1)$ is a $\mathcal{G}_{\tau}$-measurable random vector taking values in $\ubar{\mathcal{S}}\cup\{\bfDelta\}$, and $\xi=\bfDelta$ if and only if $\tau=\tau_d$. Denote by $(X^{\tau,\xi,B,S}, Y^{\tau,\xi,B,S})$ the solution of \eqref{X} and \eqref{Y} with random initial condition $(\tau,\xi)$ in the sense that $(X_{\tau-},Y_{\tau-})=\xi$. 
The \textit{exit time} of $(X^{\tau,\xi,B,S}, Y^{\tau,\xi,B,S})$ from $\mathcal{S}$ is defined by
\[\sigma^{\tau,\xi,B,S}:=\inf\{t\geq \tau: (X^{\tau,\xi,B,S}_t, Y^{\tau,\xi,B,S}_t)\notin \mathcal{S}\}.\]
Note that $\sigma^{\tau,\xi,B,S}\leq \tau_d<\infty$ since $(X^{\tau,\xi,B,S}_{\tau_d}, Y^{\tau,\xi,B,S}_{\tau_d})=\bfDelta\notin\mathcal{S}$. 

We also restrict ourselves to a subset of controls. Observe that when buying stocks, we move northwest along the vector $(-1,1-\lambda)$; when selling stocks, we move southeast along the vector $(1-\mu,-1)$. It is not hard to see by picture that
starting in $\mathcal{S}$, one can never jump to $\mathcal{S}_{c/r}$ by a transaction. On the other hand, it is never optimal to jump across $\partial\mathcal{S}_b$ from $\mathcal{S}$ because such a jump immediately leads to ruin. If we are on $\partial\mathcal{S}_{c/r}$ (resp. $\partial\mathcal{S}_{b}$), jumping to its right is impossible and jumping to its left is not optimal (resp. does not prevent ruin from happening). Therefore, we may focus on those controls under which the controlled process exits $\mathcal{S}$ via its boundary or the coffin state. The formal definition of admissibility is given below.

\begin{defn}\label{defn_adm}
Let $(\tau,\xi)$ be a random initial condition. A control pair $(B,S)\in\mathscr{A}_0$ is called $(\tau,\xi)-$\textit{admissible} if 
\[(X^{\tau,\xi,B,S}_{\sigma^{\tau,\xi,B,S}},Y^{\tau,\xi,B,S}_{\sigma^{\tau,\xi,B,S}})\in\partial\mathcal{S}\cup\{\bfDelta\}.\]
Denote the set of $(\tau,\xi)-$\textit{admissible} controls by $\mathscr{A}(\tau,\xi)$. 
\end{defn}
We have $(B,S)\equiv\mathbf{0}\in\mathscr{A}(\tau,\xi)$ for any random initial condition $(\tau,\xi)$. When $\tau=0$ and $\xi=(x,y)$, we shall omit the $\tau$-dependence in the superscripts of the controlled process and relevant stopping times, and write $\mathscr{A}(\tau,\xi)=\mathscr{A}(x,y)$. As we have argued, working with admissible controls does not change the optimal probability, i.e.
\begin{equation*}
\psi(x,y)=\inf_{(B,S)\in\mathscr{A}(x,y)}\mathbb{P}(\tau_b^{x,y,B,S}<\tau_d).
\end{equation*}

The following constructions of admissible controls will be used a few times in Section~\ref{sec:ssupsoln}. We list them here for future reference.
\begin{lemma} \ \label{lemma:admissibility}
\begin{itemize}
\item[(i)] If $(B^i, S^i)$, $i=1,2$ are $(\tau,\xi)$-admissible and $A$ is any $\mathcal{G}_\tau$-measurable set, then 
\[(B_t,S_t):=1_{\{t\geq\tau\}}\left[\left(B^1_t-B^1_{\tau-},S^1_t-S^1_{\tau-}\right)1_A+\left(B^2_t-B^2_{\tau-},S^2_t-S^2_{\tau-}\right)1_{A^c}\right]\]
is also $(\tau,\xi)$-admissible.
\item[(ii)] Let $(B^1, S^1)$ be a $(\tau,\xi)$-admissible control, $\tau_1\in[\tau,\sigma^{\tau,\xi,B^1,S^1}]$ be a $\mathbb{G}$-stopping time, and $\xi_1:=(X^{\tau,\xi,B^1,S^1}_{\tau_1}, Y^{\tau,\xi,B^1,S^1}_{\tau_1})$. Then $(\tau_1,\xi_1)$ is a random initial condition. Furthermore, let $(B^2, S^2)$ be a $(\tau_1,\xi_1)$-admissible control. Then
\[(B_t,S_t):=1_{\{t<\tau_1\}}(B^1_t,S^1_t)+1_{\{t\geq\tau_1\}}(B^2_t-B^2_{\tau_1-}+B^1_{\tau_1},S^2_t-S^2_{\tau_1-}+S^1_{\tau_1})\]
is a $(\tau,\xi)$-admissible control.
\end{itemize}
\end{lemma}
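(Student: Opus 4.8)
The plan is to verify admissibility directly from Definition 2.1 in both cases, working pathwise and tracking how the controlled processes decompose. The core observation is that each part $(B^i,S^i)$ is being applied starting from the same state it would see if run alone, so the controlled process under the spliced control coincides (pathwise, on the relevant set or time interval) with one of the original controlled processes, whence its exit behavior is inherited.

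For part (i), I would first check that $(B,S)\in\mathscr{A}_0$: both components are RCLL, non-negative, non-decreasing, and $\mathbb{G}$-adapted, since $A\in\mathcal{G}_\tau\subseteq\mathcal{G}_t$ for $t\geq\tau$ makes the indicator $1_A$ adapted, and the no-simultaneous-transaction property $\triangle B_t\triangle S_t=0$ is preserved because on $A$ the jumps equal those of $(B^1,S^1)$ and on $A^c$ those of $(B^2,S^2)$. Next, by uniqueness of solutions to \eqref{X}--\eqref{Y} with the given random initial condition, on the event $A$ we have $(X^{\tau,\xi,B,S},Y^{\tau,\xi,B,S})=(X^{\tau,\xi,B^1,S^1},Y^{\tau,\xi,B^1,S^1})$ and likewise on $A^c$ with $(B^2,S^2)$; consequently $\sigma^{\tau,\xi,B,S}=\sigma^{\tau,\xi,B^1,S^1}$ on $A$ and $=\sigma^{\tau,\xi,B^2,S^2}$ on $A^c$, and the terminal position lies in $\partial\mathcal{S}\cup\{\bfDelta\}$ on each piece by the admissibility of $(B^i,S^i)$.

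For part (ii), I would first confirm $(\tau_1,\xi_1)$ is a random initial condition: $\tau_1$ is a $\mathbb{G}$-stopping time valued in $[\tau,\sigma^{\tau,\xi,B^1,S^1}]\subseteq[0,\tau_d]$, $\xi_1$ is $\mathcal{G}_{\tau_1}$-measurable and $\ubar{\mathcal{S}}\cup\{\bfDelta\}$-valued (it sits in $\bar{\mathcal{S}}$ if $\tau_1<\sigma$, on $\partial\mathcal{S}\cup\{\bfDelta\}$ if $\tau_1=\sigma$, using part-one admissibility of $(B^1,S^1)$), and $\xi_1=\bfDelta\iff\tau_1=\tau_d$ follows from $\sigma^{\tau,\xi,B^1,S^1}\leq\tau_d$ with equality of positions at $\tau_d$. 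Then $(B,S)\in\mathscr{A}_0$ is checked as before (the shift $-B^2_{\tau_1-}+B^1_{\tau_1}$ makes the two pieces agree at $\tau_1$, so monotonicity and the RCLL property glue). By uniqueness of the SDE solution, $(X^{\tau,\xi,B,S},Y^{\tau,\xi,B,S})$ agrees with $(X^{\tau,\xi,B^1,S^1},Y^{\tau,\xi,B^1,S^1})$ on $[\tau,\tau_1)$ and, by the flow/Markov property of the linear SDE, with $(X^{\tau_1,\xi_1,B^2,S^2},Y^{\tau_1,\xi_1,B^2,S^2})$ on $[\tau_1,\infty)$. Hence $\sigma^{\tau,\xi,B,S}=\sigma^{\tau_1,\xi_1,B^2,S^2}\geq\tau_1$ (note the process is still in $\bar{\mathcal{S}}$, in fact either in $\mathcal{S}$ or already exited, at $\tau_1$, so no exit is missed on $[\tau,\tau_1)$ by the choice $\tau_1\leq\sigma^{\tau,\xi,B^1,S^1}$), and the terminal position belongs to $\partial\mathcal{S}\cup\{\bfDelta\}$ by admissibility of $(B^2,S^2)$.

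The only genuinely delicate point — which I would state carefully rather than belabor — is the bookkeeping at the splice time $\tau_1$ in part (ii): one must confirm that no exit from $\mathcal{S}$ is created or destroyed at $\tau_1$ itself. Because $\tau_1\leq\sigma^{\tau,\xi,B^1,S^1}$, the first piece has not exited before $\tau_1$, and the definition of $\xi_1$ as the value of the controlled process (including any jump of $B^1,S^1$ at $\tau_1$) means the second piece starts exactly where the first left off, so $\sigma^{\tau,\xi,B,S}$ is simply $\sigma^{\tau_1,\xi_1,B^2,S^2}$ and the exit location is governed entirely by the admissible control $(B^2,S^2)$. Everything else is a routine verification of measurability and path regularity.
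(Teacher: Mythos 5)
Your proposal is correct and takes essentially the same route as the paper: verify $\mathscr{A}_0$-membership directly, use pathwise uniqueness of the SDE to identify the spliced controlled process with one of the original ones (on $A$/$A^c$ in part (i), on $[\tau,\tau_1)$/$[\tau_1,\infty)$ in part (ii)), and then inherit the exit time and exit position from the admissibility of the constituent controls. The one observation you flag as delicate — that $\xi_1$ already includes the $(B^1,S^1)$ jump at $\tau_1$ so the two pieces splice cleanly and $\sigma^{\tau,\xi,B,S}=\sigma^{\tau_1,\xi_1,B^2,S^2}\geq\tau_1$ — is exactly the point the paper makes as well.
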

\begin{proof}
(i) $(B,S)$ is $\mathbb{G}$-adapted by the definition of stopping time and stopping time filtration, and the $\mathbb{G}$-adaptedness of $(B^i, S^i)$, $i=1,2$. It is nonnegative because $(B^i, S^i)$, $i=1,2$ are non-decreasing. Monotonicity, RCLL property and that $\triangle B$ and $\triangle S$ are not both strictly positive also follow from the assumption that $(B^i, S^i)\in\mathscr{A}_0$, $i=1,2$. So $(B,S)\in\mathscr{A}_0$. By pathwise uniqueness of the solution to \eqref{X}, \eqref{Y}, we have
\[(X^{\tau,\xi,B,S}_t,Y^{\tau,\xi,B,S}_t)=1_A (X^{\tau,\xi,B^1,S^1}_t,Y^{\tau,\xi,B^1,S^1}_t)+1_{A^c} (X^{\tau,\xi,B^2,S^2}_t,Y^{\tau,\xi,B^2,S^2}_t), \quad t\geq \tau.\]
It follows that
\[\sigma^{\tau,\xi,B,S}=1_A\sigma^{\tau,\xi,B^1,S^1}+1_{A^c}\sigma^{\tau,\xi,B^2,S^2},\]
and thus
\[(X^{\tau,\xi,B,S}_{\sigma^{\tau,\xi,B,S}},Y^{\tau,\xi,B,S}_{\sigma^{\tau,\xi,B,S}})=1_A (X^{\tau,\xi,B^1,S^1}_{\sigma^{\tau,\xi,B^1,S^1}},Y^{\tau,\xi,B^1,S^1}_{\sigma^{\tau,\xi,B^1,S^1}})+1_{A^c} (X^{\tau,\xi,B^2,S^2}_{\sigma^{\tau,\xi,B^2,S^2}},Y^{\tau,\xi,B^2,S^2}_{\sigma^{\tau,\xi,B^2,S^2}})\in \partial\mathcal{S}\cup\{\bfDelta\}\]
by the $(\tau,\xi)$-admissibility of $(B^i,S^i), i=1,2$.

(ii) Clearly, $\tau_1$ is a $\mathbb{G}$-stopping time taking values in $[\tau,\tau_d]$ and $\xi_1$ is $\mathcal{G}_{\tau_1}$-measurable. Since $\tau_1\leq \sigma^{\tau,\xi,B^1,S^1}$, the $(\tau,\xi)$-admissibility of $(B^1,S^1)$ implies $\xi_1\in\ubar{\mathcal{S}}\cup\{\bfDelta\}$. Moreover, $\xi_1=\bfDelta$ if and only if $\tau_1=\tau_d$. So $(\tau_1,\xi_1)$ is a valid random initial condition. It is routine to check $(B, S)\in\mathscr{A}_0$. To show $(B,S)\in\mathscr{A}(\tau,\xi)$, observe that 
\[(X^{\tau,\xi,B,S}_{t},Y^{\tau,\xi,B,S}_{t})=\begin{cases}
(X^{\tau,\xi,B^1,S^1}_{t},Y^{\tau,\xi,B^1,S^1}_{t}), & \tau\leq t<\tau_1,\\
(X^{\tau_1,\xi_1,B^2,S^2}_{t},Y^{\tau_1,\xi_1,B^2,S^2}_{t}), & t\geq \tau_1.
\end{cases}\]
This, together with $\tau_1\leq \sigma^{\tau,\xi,B^1,S^1}$, imply $\sigma^{\tau,\xi,B,S}=\sigma^{\tau_1,\xi_1,B^2,S^2}\geq \tau_1$.
Since $(B^2,S^2)\in\mathscr{A}(\tau_1,\xi_1)$, we have
\[(X^{\tau,\xi,B,S}_{\sigma^{\tau,\xi,B,S}},Y^{\tau,\xi,B,S}_{\sigma^{\tau,\xi,B,S}})=(X^{\tau_1,\xi_1,B^2,S^2}_{\sigma^{\tau_1,\xi_1,B^2,S^2}},Y^{\tau_1,\xi_1,B^2,S^2}_{\sigma^{\tau_1,\xi_1,B^2,S^2}})\in \partial\mathcal{S}\cup\{\bfDelta\}.\]
\end{proof}

\subsection{Main result}
\begin{thm}\label{thm:main result}
The value function $\psi$ is the unique (continuous) viscosity solution to the HJB equation \eqref{HJB} satisfying the boundary condition \eqref{BC}.
\end{thm}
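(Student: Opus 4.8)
The plan is to follow the stochastic Perron paradigm exactly as outlined in the introduction, assembling the pieces developed in Sections~\ref{sec:ssupsoln}--\ref{sec:comparison}. Write $v_+ := \inf_{v\in\mathcal{V}^+} v$ for the infimum of stochastic supersolutions and $v_- := \sup_{v\in\mathcal{V}^-} v$ for the supremum of stochastic subsolutions. The first task is to verify that both classes are nonempty and that their envelopes already bracket the value function: by construction (the explicit $\ubar\psi$ of \eqref{upperbdd} and the $\psi_k$, $\lbar\psi$ of \eqref{eq:psi_k}--\eqref{lowerbdd}), we have candidates in each class satisfying the boundary condition \eqref{BC}, so $\lbar\psi \le v_- \le \psi \le v_+ \le \ubar\psi$ on $\ubar{\mathcal{S}}$, where the middle inequalities come from the defining property that stochastic supersolutions (resp.\ subsolutions) dominate (resp.\ are dominated by) $\psi$. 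In particular $v_\pm$ satisfy \eqref{BC} on $\partial\mathcal{S}_b \cup \partial\mathcal{S}_{c/r}$, since they are squeezed between $\ubar\psi$ and $\lbar\psi$ there, so the boundary viscosity conditions hold trivially --- this is the point emphasized in the introduction.

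The second task is the Perron-type argument: $v_+$ is a viscosity subsolution of \eqref{HJB} and $v_-$ is a viscosity supersolution of \eqref{HJB}. I would simply cite the propositions proved in Section~\ref{sec:ssupsoln} (for $v_+$, arguing by contradiction in the three separate cases corresponding to the three components of the variational inequality, handling the singular-control jump by the two-step boundary splitting described in the introduction) and Section~\ref{sec:ssubsoln} (for $v_-$). Together with the boundary behavior from the first task, this makes $v_+$ a (bounded) viscosity subsolution and $v_-$ a viscosity supersolution of the Dirichlet problem \eqref{HJB}--\eqref{BC}.

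The third and decisive task is to invoke the comparison principle from Section~\ref{sec:comparison}: any bounded viscosity subsolution is $\le$ any bounded viscosity supersolution satisfying \eqref{BC}. Applying it to the pair $(v_+, v_-)$ gives $v_+ \le v_-$; combined with $v_- \le \psi \le v_+$ this forces
\[
v_- = \psi = v_+ ,
\]
so $\psi$ is a viscosity solution. It is continuous because it is sandwiched between the continuous functions $v_-$ and $v_+$ which now coincide with it (alternatively, it inherits continuity directly from being simultaneously a sub- and supersolution together with comparison). Uniqueness is immediate: if $\tilde u$ is any other continuous viscosity solution satisfying \eqref{BC}, then applying comparison in both directions ($\tilde u$ sub vs.\ $\psi$ super, and $\psi$ sub vs.\ $\tilde u$ super) yields $\tilde u = \psi$.

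The main obstacle in this program is, as the paper itself stresses, entirely front-loaded into the two supporting pillars: the Perron step for $v_+$ in Section~\ref{sec:ssupsoln} --- where the variational-inequality structure forces a three-case contradiction argument and the singularity of the control means the state can jump out of the neighborhood on which the local viscosity estimates are valid, necessitating the two-step jump decomposition --- and the comparison principle in Section~\ref{sec:comparison}, which rests on constructing a strict classical subsolution with suitable growth (following the idea borrowed from \cite{Kabanov04}) to perturb and separate sub- and supersolutions. Once those are in hand, the proof of Theorem~\ref{thm:main result} itself is the short three-line sandwiching argument above.
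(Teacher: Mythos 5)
Your proposal is correct and follows essentially the same route as the paper's own proof: sandwich $\psi$ between $v_-$ and $v_+$ via Remarks~\ref{rmk:ssupsoln>=psi} and~\ref{rmk:ssubsoln<=psi}, invoke Propositions~\ref{prop:v+ is a vsubsoln} and~\ref{prop:v- is a vsupsoln} for the viscosity semi-solution properties (with the boundary inequalities $v_+\le v_-$ on $\partial\mathcal{S}$ coming from those same propositions, equivalently from the squeeze between $\lbar\psi$ and $\ubar\psi$), and then apply the comparison principle of Proposition~\ref{prop:comparison} to close the gap and obtain uniqueness. The only minor cosmetic difference is that you state $v_\pm$ satisfy \eqref{BC} with equality on the boundary (which is true by the squeeze and noted in the paper's footnotes), whereas the paper only needs and uses the one-sided inequality $v_+\le v_-$ there.
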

The proof of Theorem \ref{thm:main result} is deferred to the end of Section~\ref{sec:comparison}.

\section{Stochastic supersolution}\label{sec:ssupsoln}

\begin{defn}\label{defn_ssupsoln}
A bounded u.s.c. function $v$ on $\ubar{\mathcal{S}}$ is called a \textit{stochastic supersolution} of \eqref{HJB}, \eqref{BC} if 
\begin{itemize}
\item[(SP1)] $v\geq 1$ on $\partial\mathcal{S}_b$, $v\geq 0$ on $\partial\mathcal{S}_{c/r}$;
\item[(SP2)] for any random initial condition $(\tau,\xi)$, there exists $(B,S)\in\mathscr{A}(\tau,\xi)$ 
such that
\begin{equation*}\label{supermart}
 \mathbb{E}[v(X^{\tau,\xi,B,S}_\rho, Y^{\tau,\xi,B,S}_\rho)|\mathcal{G}_\tau]\leq v(\xi)
\end{equation*}
for all $\mathbb{G}$-stopping time $\rho\in[\tau, \sigma^{\tau,\xi,B,S}]$, where $v$ is understood to be its extension to $\ubar{\mathcal{S}}\cup\{\bfDelta\}$.
\end{itemize}
Denote the set of stochastic supersolutions by $\mathcal{V}^+$. 
\end{defn}

\begin{remark}
$\mathcal{V}^+\neq\emptyset$ since the constant $1\in \mathcal{V}^+$. There is a more useful stochastic supersolution: the upper bound function $\ubar{\psi}$ defined in \eqref{upperbdd}, which satisfies (SP1) with equality. (See Lemma \ref{lemma:ubarpsi is a ssupsoln}.) The existence of such a stochastic supersolution 
automatically guarantees the the upper stochastic envelope (which will be introduced shortly) satisfies the boundary condition \eqref{BC}.
\end{remark}

\begin{remark}\label{rmk:ssupsoln>=psi}
Any stochastic supersolution $v$ dominates the value function $\psi$ on $\ubar{\mathcal{S}}$.
To see this, first note that $v\geq \psi$ on $\partial\mathcal{S}$ by (SP1). Then for any $(x,y)\in\mathcal{S}$, take $\tau=0$ and $\xi=(x,y)$. Let $(B,S)\in\mathscr{A}(x,y)$ be given by (SP2) for $v$. Let $\rho=\sigma^{x,y,B,S}$. To simplify notation, we write $\tau_b$ for $\tau_b^{x,y,B,S}$ and $\tau_s$ for $\tau_{s}^{x,y,B,S}:=\inf\{t\geq 0: (X^{x,y,B,S}_t, Y^{x,y,B,S}_t)\in\ubar{\mathcal{S}}_{c/r}\}$. We have
\[v(x,y)\geq \mathbb{E}\left[v(X^{x,y,B,S}_\rho, Y^{x,y,B,S}_\rho)\right]\geq\mathbb{E}\left[1_{\{(X^{x,y,B,S}_{\rho}, Y^{x,y,B,S}_{\rho})\in \partial\mathcal{S}_b\}}\right]=\mathbb{P}\left(\tau_b<\tau_d\wedge\tau_s\right).\]
where the first inequality holds by (SP2) and the second inequality holds by (SP1). Now, let
\[(B'_t, S'_t)=(B_t,S_t)1_{\{t<\tau_s\}}+((X^{x,y,B,S}_{\tau_s}-c/r)^++B_{\tau_s},(Y^{x,y,B,S}_{\tau_s})^++S_{\tau_s})1_{\{t\geq\tau_s\}}.\]
That is, $(B', S')$ follows $(B,S)$ before hitting the safe region, and at the moment when the safe region is hit (by diffusion), immediately liquidate all stock position and do no more transaction afterwards. This ensures that once the safe region is reached, death will definitely happen before ruin. It is easy to check $(B',S')\in\mathscr{A}_0$ and $\mathbb{P}(\tau_b<\tau_d\wedge \tau_s)=\mathbb{P}(\tau^{x,y,B',S'}_b<\tau_d)$. We therefore have
\[v(x,y)\geq \mathbb{P}(\tau^{x,y,B',S'}_b<\tau_d)\geq \psi(x,y).\]
\end{remark}

\begin{lemma}\label{lemma:ubarpsi is a ssupsoln}
$\ubar{\psi}\in\mathcal{V}^+$. 
\end{lemma}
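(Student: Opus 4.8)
The plan is to verify the two defining conditions of a stochastic supersolution for $\ubar{\psi}$. Condition (SP1) is already noted in the excerpt: $\ubar{\psi}$ satisfies the boundary conditions \eqref{BC}, so in particular $\ubar{\psi}=1$ on $\partial\mathcal{S}_b$ and $\ubar{\psi}=0$ on $\partial\mathcal{S}_{c/r}$. The function is continuous (hence u.s.c.) and $[0,1]$-valued (hence bounded) on $\ubar{\mathcal{S}}$, so the only real work is (SP2). For (SP2) the natural candidate control is the one that motivates the definition of $\ubar{\psi}$ itself: given a random initial condition $(\tau,\xi)$, take the control that immediately liquidates the entire stock position at time $\tau$ and makes no further transactions. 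Concretely, writing $\xi=(\xi^0,\xi^1)$, set $\triangle B_\tau=0$, $\triangle S_\tau=(\xi^1)^{+}$ and (to handle a possibly negative stock position) a compensating buy $\triangle B_\tau=(\xi^1)^{-}/(1-\lambda)$ — actually only one of these is nonzero depending on the sign of $\xi^1$, which keeps us inside $\mathscr{A}_0$ — and then $B,S$ constant on $(\tau,\infty)$. On $\{\xi=\bfDelta\}$ we use the null control. After this single transaction the post-transaction wealth in the bank is exactly $L(\xi^0,\xi^1)$ and the stock holding is $0$.

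Next I would identify the controlled process explicitly after the liquidation. With $Y\equiv 0$ after time $\tau$, the bank equation \eqref{X} reduces to the linear ODE $dX_t=(rX_t-c)\,dt$, whose solution started from $L(\xi)$ at time $\tau$ is $X_t = \frac{c}{r} + \bigl(L(\xi)-\frac{c}{r}\bigr)e^{r(t-\tau)}$, and $L(X_t,0)=X_t$. Hence along this trajectory the liquidation value $L(X_t,Y_t)$ moves deterministically and monotonically: if $L(\xi)<c/r$ it decreases, and it hits level $b$ at the deterministic time $t^* = \tau + \frac{1}{r}\log\frac{c-rb}{\,c-rL(\xi)\,}$, which is exactly the classical "time to ruin with all money in the bank." One then checks that $\ubar{\psi}(X_t,Y_t)=\bigl(\frac{c-rX_t}{c-rb}\bigr)^{\beta/r}$ satisfies, as a function of $t$, the ODE $\frac{d}{dt}\ubar{\psi}(X_t,Y_t)=\beta\,\ubar{\psi}(X_t,Y_t)$ on $[\tau,t^*\wedge\tau_d)$ — this is the statement $\mathcal{L}\ubar{\psi}=0$ restricted to the zero-stock line, and it is a short direct computation using $\frac{d}{dt}(c-rX_t)=-r(c-rX_t)$. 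Consequently the process $e^{-\beta(t-\tau)}\ubar{\psi}(X_t,Y_t)$ is constant in $t$ along this path before death.

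Now I would assemble the supermartingale (in fact martingale) inequality. Recall $\tau_d$ is independent of $W$ and exponential with rate $\beta$, and after death the process is sent to $\bfDelta$ with $\ubar{\psi}(\bfDelta)=0$. For a $\mathbb{G}$-stopping time $\rho\in[\tau,\sigma^{\tau,\xi,B,S}]$, note $\sigma^{\tau,\xi,B,S}=t^*\wedge\tau_d$ here, and on $\{\rho<\tau_d\}$ we have $\ubar{\psi}(X_\rho,Y_\rho)=e^{\beta(\rho-\tau)}\ubar{\psi}(\xi)$ by the previous step, while on $\{\rho\ge\tau_d\}$ we have $\ubar{\psi}(X_\rho,Y_\rho)=0$. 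Therefore, using the $\mathbb{G}$-conditional law of $\tau_d$ given $\mathcal{G}_\tau$ on $\{\tau<\tau_d\}$ (exponential residual lifetime, independent of the Brownian path, hence of $\rho$ and the trajectory),
\[
\mathbb{E}\bigl[\ubar{\psi}(X_\rho,Y_\rho)\mid\mathcal{G}_\tau\bigr]
= \mathbb{E}\bigl[\,\mathbf{1}_{\{\rho<\tau_d\}}\,e^{\beta(\rho-\tau)}\,\big|\,\mathcal{G}_\tau\bigr]\,\ubar{\psi}(\xi)
= \mathbb{E}\bigl[\,e^{\beta(\rho-\tau)}e^{-\beta(\rho-\tau)}\,\big|\,\mathcal{G}_\tau\bigr]\,\ubar{\psi}(\xi)
= \ubar{\psi}(\xi),
\]
where the middle equality conditions first on $\mathcal{F}_\infty\vee\mathcal{G}_\tau$ (so $\rho$ is known) and uses $\mathbb{P}(\tau_d>\rho\mid \mathcal{F}_\infty\vee\mathcal{G}_\tau)=e^{-\beta(\rho-\tau)}$ on $\{\tau<\tau_d\}$; on $\{\tau=\tau_d\}$ the identity is trivial since both sides are $0$. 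This gives (SP2) with equality, and in fact one only needs the inequality "$\le$." The only genuine subtlety — the step I expect to be the main obstacle — is the careful handling of the conditioning on the death time: making precise that, given $\mathcal{G}_\tau$ and the Brownian path, $\tau_d-\tau$ is exponential$(\beta)$ on $\{\tau<\tau_d\}$ and independent of the (Brownian-measurable) stopping time $\rho$ and of the trajectory, so that the tower property produces exactly the factor $e^{-\beta(\rho-\tau)}$ that cancels the deterministic growth of $\ubar{\psi}$ along the zero-stock line. Verifying $(B,S)\in\mathscr{A}(\tau,\xi)$ is easy: the post-transaction point lies in $\ubar{\mathcal{S}}$, the deterministic trajectory exits $\mathcal{S}$ precisely at $\partial\mathcal{S}_b$ (at time $t^*$) or at $\bfDelta$ (at $\tau_d$, if $\tau_d<t^*$), so the exit point is in $\partial\mathcal{S}\cup\{\bfDelta\}$ as required by Definition \ref{defn_adm}.
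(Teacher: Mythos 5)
Your control (immediate full liquidation at time $\tau$, then do nothing) is exactly the one the paper uses, the admissibility check is fine, and the identification of the deterministic post-liquidation trajectory and of the identity $\ubar{\psi}(X_t,0)=e^{\beta(t-\tau)}\ubar{\psi}(\xi)$ on $\{t<\tau_d\}$ is correct. So (SP1) and the ``structural'' part of (SP2) match the paper.

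The gap is exactly where you flag it, and it is a genuine one as written. You argue that
\[
\mathbb{P}\bigl(\tau_d>\rho\,\big|\,\mathcal{F}_\infty\vee\mathcal{G}_\tau\bigr)=e^{-\beta(\rho-\tau)}\quad\text{on }\{\tau<\tau_d\},
\]
treating $\rho$ as ``known'' after conditioning on $\mathcal{F}_\infty\vee\mathcal{G}_\tau$. But $\rho$ is only required to be a $\mathbb{G}$-stopping time (indeed (SP2) must hold for \emph{every} such $\rho\in[\tau,\sigma^{\tau,\xi,B,S}]$), and $\mathbb{G}$ carries information about $\tau_d$, so $\rho$ need not be $\mathcal{F}_\infty\vee\mathcal{G}_\tau$-measurable and the right-hand side above is not even a legitimate version of the conditional probability. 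For instance $\rho=\sigma^{\tau,\xi,B,S}=t^*\wedge\tau_d$ depends on $\tau_d$. Your final identity $\mathbb{E}[1_{\{\rho<\tau_d\}}e^{\beta(\rho-\tau)}\mid\mathcal{G}_\tau]=1_{\{\tau<\tau_d\}}$ is nevertheless correct, but it has to be justified differently: observe that $M_t:=1_{\{t<\tau_d\}}e^{\beta(t-\tau)}$, $t\geq\tau$, is a $\mathbb{G}$-martingale (this is just the statement that $N_t-\beta t$ is a $\mathbb{G}$-martingale, since $dM_t=-M_{t-}\,d(N_t-\beta t)$ up to $\tau_d$), so optional sampling at the bounded stopping time $\rho\leq t^*$ gives the claim. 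This is precisely what the paper accomplishes by applying It\^o's formula to $f(X^{\tau,\xi,B,S})$ and observing that, after the drift cancels, only an integral against the compensated Poisson process $N_t-\beta t$ remains, whose $\mathcal{G}_\tau$-conditional expectation vanishes. Replacing your informal conditioning step with this martingale/optional-sampling argument closes the proof.
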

\begin{proof}
We only show (SP2). Let $(\tau,\xi)$ be any random initial condition. Define
\[(B_t,S_t):=1_{\{t\geq \tau\}}\left(\frac{(\xi^1)^-}{1-\lambda}, (\xi^1)^+\right).\]
Intuitively, what $(B,S)$ does is to immediately liquidate the stock position at time $\tau$ and do no more transaction afterwards. It can be checked that $(X^{\tau,\xi,B,S}_{\sigma^{\tau,\xi,B,S}},Y^{\tau,\xi,B,S}_{\sigma^{\tau,\xi,B,S}})\in\{(b,0),(c/r,0),\bfDelta\}$, thus $(B,S)\in\mathscr{A}(\tau,\xi)$. We have $(X^{\tau,\xi,B,S}_\tau, Y^{\tau,\xi,B,S}_\tau)=1_{\{\tau<\tau_d\}}(L(\xi),0)+1_{\{\tau=\tau_d\}}\bfDelta$ and $Y^{\tau,\xi,B,S}_t=0$ for all $t\in[\tau,\tau_d)$. Let $\rho\in[\tau,\sigma^{\tau,\xi,B,S}]$ be any $\mathbb{G}$-stopping time. Let $f(x):=\ubar{\psi}(x,0)\in C[b,c/r]\cap C^2[b,c/r)$. With slight abuse of notation, we also write $X^{\tau,\xi,B,S}_t=\bfDelta$ when $t=\tau_d$, and set $f(\bfDelta)=0$. Apply It\^{o}'s formula to $f(X^{\tau,\xi,B,S})$, we get
\begin{align*}
f(X^{\tau,\xi,B,S}_\rho)-f(X^{\tau,\xi,B,S}_\tau)&=\int_\tau^\rho f'(X^{\tau,\xi,B,S}_t)(rX^{\tau,\xi,B,S}_t-c)dt+\int_\tau^\rho \left(f(\bfDelta)-f(X^{\tau,\xi,B,S}_{t-})\right)dN_t\\
&=\int_{\tau}^\rho \left[f'(x)(rx-c)-\beta f(x)\right]\big|_{x=X^{\tau,\xi,B,S}_t}dt+\int_\tau^\rho -f(X^{\tau,\xi,B,S}_{t-}) d(N_t-\beta t)\\
&=\int_\tau^\rho -f(X^{\tau,\xi,B,S}_{t-}) d(N_t-\beta t),
\end{align*}
where we used the explicit formula of $f$ to kill the drift. Taking conditional expectation yields
\[\mathbb{E}[f(X^{\tau,\xi,B,S}_\rho)|\mathcal{G}_\tau]=f(X^{\tau,\xi,B,S}_\tau).\]
It follows that
\begin{align*}
\mathbb{E}[\ubar{\psi}(X^{\tau,\xi,B,S}_\rho, Y^{\tau,\xi,B,S}_\rho)|\mathcal{G}_\tau]&=\mathbb{E}[1_{\{\rho<\tau_d\}}\ubar{\psi}(X^{\tau,\xi,B,S}_\rho, 0)|\mathcal{G}_\tau]=\mathbb{E}[1_{\{\rho<\tau_d\}}f(X^{\tau,\xi,B,S}_\rho)|\mathcal{G}_\tau]\\
&=\mathbb{E}[f(X^{\tau,\xi,B,S}_\rho)|\mathcal{G}_\tau]=f(X^{\tau,\xi,B,S}_\tau)=1_{\{\tau<\tau_d\}}f(L(\xi))\\
&=1_{\{\tau<\tau_d\}}\ubar{\psi}(L(\xi),0)=1_{\{\tau<\tau_d\}}\ubar{\psi}(\xi)=\ubar{\psi}(\xi).
\end{align*}
In the second last equality, we used $\ubar{\psi}(x,y)=\ubar{\psi}(L(x,y),0)$ for $(x,y)\in\ubar{\mathcal{S}}$.
\end{proof}

\begin{lemma}\label{lemma_stability_ssupsoln}
Let $v_1, v_2\in\mathcal{V}^+$. Then $v_1\wedge v_2\in\mathcal{V}^+$.
\end{lemma}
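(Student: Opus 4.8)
The plan is to show that the pointwise minimum $v := v_1 \wedge v_2$ inherits the two defining properties of a stochastic supersolution. The boundary inequalities (SP1) are immediate: if $v_1 \geq 1$ and $v_2 \geq 1$ on $\partial\mathcal{S}_b$, then $v_1 \wedge v_2 \geq 1$ there, and similarly on $\partial\mathcal{S}_{c/r}$; boundedness and upper semicontinuity of $v$ follow since the minimum of two bounded u.s.c.\ functions is bounded and u.s.c. So the entire content of the lemma is the supermartingale-type property (SP2).

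The strategy for (SP2) is the standard one for proving stability under minimum: given a random initial condition $(\tau,\xi)$, we partition $\mathcal{G}_\tau$ according to which of $v_1(\xi)$, $v_2(\xi)$ is smaller, apply the definition of each $v_i$ to get a control, and then splice the two controls together using the pasting lemma (Lemma~\ref{lemma:admissibility}(i)). Concretely, let $A := \{v_1(\xi) \leq v_2(\xi)\} \in \mathcal{G}_\tau$. Let $(B^1,S^1) \in \mathscr{A}(\tau,\xi)$ be the control furnished by (SP2) for $v_1$ and $(B^2,S^2) \in \mathscr{A}(\tau,\xi)$ the one for $v_2$. Define $(B,S)$ by pasting $(B^1,S^1)$ on $A$ and $(B^2,S^2)$ on $A^c$ exactly as in Lemma~\ref{lemma:admissibility}(i); that lemma gives $(B,S) \in \mathscr{A}(\tau,\xi)$ and, more importantly, the pathwise identity
\[
(X^{\tau,\xi,B,S}_t, Y^{\tau,\xi,B,S}_t) = 1_A (X^{\tau,\xi,B^1,S^1}_t, Y^{\tau,\xi,B^1,S^1}_t) + 1_{A^c}(X^{\tau,\xi,B^2,S^2}_t, Y^{\tau,\xi,B^2,S^2}_t), \quad t \geq \tau,
\]
together with $\sigma^{\tau,\xi,B,S} = 1_A \sigma^{\tau,\xi,B^1,S^1} + 1_{A^c}\sigma^{\tau,\xi,B^2,S^2}$. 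Now fix a $\mathbb{G}$-stopping time $\rho \in [\tau, \sigma^{\tau,\xi,B,S}]$. On $A$ we have $\rho \in [\tau, \sigma^{\tau,\xi,B^1,S^1}]$ and on $A^c$ we have $\rho \in [\tau,\sigma^{\tau,\xi,B^2,S^2}]$, so using $1_A \in \mathcal{G}_\tau$ to pull the indicator inside the conditional expectation,
\[
\mathbb{E}[v(X^{\tau,\xi,B,S}_\rho, Y^{\tau,\xi,B,S}_\rho) \mid \mathcal{G}_\tau] = 1_A \, \mathbb{E}[v(X^{\tau,\xi,B^1,S^1}_\rho, Y^{\tau,\xi,B^1,S^1}_\rho)\mid\mathcal{G}_\tau] + 1_{A^c}\,\mathbb{E}[v(X^{\tau,\xi,B^2,S^2}_\rho, Y^{\tau,\xi,B^2,S^2}_\rho)\mid\mathcal{G}_\tau].
\]
Since $v \leq v_1$ and $v \leq v_2$ pointwise, each term is bounded by the corresponding expectation for $v_i$, which by (SP2) for $v_i$ is at most $v_i(\xi)$; hence the right-hand side is at most $1_A v_1(\xi) + 1_{A^c} v_2(\xi) = v_1(\xi) \wedge v_2(\xi) = v(\xi)$ by the choice of $A$. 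This is precisely (SP2) for $v$.

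The only point requiring any care — and what I would flag as the main obstacle, though it is a minor one — is verifying that $\rho$ restricted to $A$ (resp.\ $A^c$) is a legitimate $\mathbb{G}$-stopping time lying in $[\tau,\sigma^{\tau,\xi,B^i,S^i}]$, so that (SP2) for $v_i$ genuinely applies. Because $\rho \leq \sigma^{\tau,\xi,B,S}$ and $\sigma^{\tau,\xi,B,S}$ agrees with $\sigma^{\tau,\xi,B^1,S^1}$ on $A$, we do not literally need to modify $\rho$: on $A$ the inequality $\tau \leq \rho \leq \sigma^{\tau,\xi,B^1,S^1}$ holds pathwise, and since $v_i$'s (SP2) is required for \emph{all} $\mathbb{G}$-stopping times in the relevant range, multiplying its conclusion by the $\mathcal{G}_\tau$-measurable indicator $1_A$ is harmless. (If one prefers, one can instead define $\rho_1 := \rho \cdot 1_A + \sigma^{\tau,\xi,B^1,S^1}\cdot 1_{A^c}$, which is a $\mathbb{G}$-stopping time in $[\tau,\sigma^{\tau,\xi,B^1,S^1}]$ agreeing with $\rho$ on $A$, and apply (SP2) for $v_1$ to $\rho_1$.) Everything else — adaptedness, the $\mathscr{A}_0$ structural properties, the pathwise decomposition of the state and exit time — is handed to us by Lemma~\ref{lemma:admissibility}(i), so the proof is short.
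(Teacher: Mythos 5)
Your proof is correct and follows exactly the same approach as the paper: same set $A=\{v_1(\xi)\le v_2(\xi)\}$, same pasted control via Lemma~\ref{lemma:admissibility}(i), same use of $v\le v_i$ together with (SP2) for each $v_i$. The paper simply omits the supermartingale computation, citing Rokhlin's Lemma~1 with inequalities reversed, whereas you spell it out and correctly flag (and resolve, via the stopping time $\rho_1:=\rho\,1_A+\sigma^{\tau,\xi,B^1,S^1}1_{A^c}$) the one subtle point that $\rho$ itself need not lie below $\sigma^{\tau,\xi,B^1,S^1}$ off $A$.
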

\begin{proof}
The minimum of bounded u.s.c. functions is still bounded and u.s.c.. (SP1) is clearly satisfied. For (SP2), let $(B^i, S^i)\in\mathscr{A}(\tau,\xi), i=1,2$ be the admissible control corresponding to $v_i$ and the random initial condition $(\tau,\xi)$. Put $A:=\{v_1(\xi)\leq v_2(\xi)\}\in\mathcal{G}_{\tau}$. The control
\[(B_t,S_t):=1_{\{t\geq\tau\}}\left[\left(B^1_t-B^1_{\tau-},S^1_t-S^1_{\tau-}\right)1_A+\left(B^2_t-B^2_{\tau-},S^2_t-S^2_{\tau-}\right)1_{A^c}\right]\]
serves the purpose. $(\tau,\xi)$-admissible follows from Lemma \ref{lemma:admissibility}.i, and the remaining proof is very similar to that of \cite[Lemma 1]{Rokhlin14a} except that the process $Z$ is replaced by $v(X,Y)$ and the direction of inequalities are reversed. So we omit the details.
\end{proof}

\begin{prop}\label{prop:v+ is a vsubsoln}
The \textit{upper stochastic envelope}
\[v_+(x,y):=\inf_{v\in \mathcal{V}^+} v(x,y)\]
is a viscosity subsolution of \eqref{HJB} satisfying $v_+\leq 1$ on $\partial \mathcal{S}_b$ and $v_+\leq 0$ on $\partial \mathcal{S}_{c/r}$. 
\end{prop}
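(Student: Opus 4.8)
The plan is to argue by contradiction in the standard Perron style, but adapted to the three-region variational inequality. The boundary inequalities $v_+\leq 1$ on $\partial\mathcal{S}_b$ and $v_+\leq 0$ on $\partial\mathcal{S}_{c/r}$ are immediate: since $\ubar\psi\in\mathcal{V}^+$ by Lemma~\ref{lemma:ubarpsi is a ssupsoln} and $\ubar\psi$ satisfies \eqref{BC} with equality, we get $v_+\leq\ubar\psi$, hence $v_+\leq 1$ on $\partial\mathcal{S}_b$ and $v_+\leq 0$ on $\partial\mathcal{S}_{c/r}$. First I would record that $v_+$ is u.s.c.\ (being an infimum of u.s.c.\ functions, after the usual relaxed-infimum/upper-semicontinuous-envelope observation, which here is painless because $\ubar\psi$ is continuous and dominates everything of interest near the boundary) and that $v_+\in[\lbar\psi,\ubar\psi]$ by Remark~\ref{rmk:ssupsoln>=psi} together with $v_+\geq v_-\geq\lbar\psi$.

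For the interior subsolution property, fix $(x_0,y_0)\in\mathcal{S}$ and a test function $\varphi\in C^2(\mathcal{S})$ such that $v_+-\varphi$ has a strict local maximum $0$ at $(x_0,y_0)$. Suppose for contradiction that
\[
\max\{\mathcal{L}\varphi(x_0,y_0),\ -(1-\mu)\varphi_x(x_0,y_0)+\varphi_y(x_0,y_0),\ \varphi_x(x_0,y_0)-(1-\lambda)\varphi_y(x_0,y_0)\}=:-3\varepsilon<0.
\]
By continuity, on a small closed ball $\bar B_\delta(x_0,y_0)\subset\mathcal{S}$ all three quantities are $\leq-2\varepsilon$ with $\varphi$ in place of the argument $(x,y)$. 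Now follow the standard recipe: pick $v\in\mathcal{V}^+$ with $v(x_0,y_0)$ within $\eta$ of $v_+(x_0,y_0)$, let $m:=\min_{\partial B_\delta}(v-\varphi)>0$ by strictness of the maximum, and define the candidate
\[
\tilde v:=\begin{cases}\min\{v,\ \varphi+m/2\}&\text{on }B_\delta(x_0,y_0),\\ v&\text{outside }B_\delta(x_0,y_0),\end{cases}
\]
which is bounded u.s.c.\ and strictly below $v$ in a neighborhood of $(x_0,y_0)$, contradicting the definition of $v_+$ once we show $\tilde v\in\mathcal{V}^+$. Verifying (SP1) is trivial (the modification only happens in the interior). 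The heart of the matter is (SP2): given a random initial condition $(\tau,\xi)$, we need an admissible control making $\tilde v(X,Y)$ a supermartingale up to exit. I would use the control $(B,S)$ given by (SP2) for $v$ until the controlled process first enters $B_\delta(x_0,y_0)$ at some stopping time $\theta$; on $\{\theta<\sigma\}$, concatenate (using Lemma~\ref{lemma:admissibility}.ii) with a control that keeps the process frozen — i.e.\ $(B,S)$ constant, no transaction — so that inside the ball the process is the pure diffusion \eqref{X}--\eqref{Y} with $dB=dS=0$, run until it exits $\bar B_\delta$ or dies; then switch back to $v$'s control. On the stretch inside the ball, $\varphi+m/2$ is a classical supersolution of $\mathcal{L}\,\cdot\,\leq 0$ there (the first component of the inequality, with the slack $-2\varepsilon$), so It\^o's formula applied to $e^{-\beta(t-\tau)}\varphi(X_t,Y_t)$ — exactly as in the computation in Lemma~\ref{lemma:ubarpsi is a ssupsoln} — shows $\varphi(X,Y)+m/2$ is a supermartingale there; outside we use $v$'s supermartingale property; and the pasting is consistent because on $\partial B_\delta$ we have $\varphi+m/2\leq v-m/2<v$, so $\tilde v$ matches the smaller branch continuously enough for the minimum to behave. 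This yields $\mathbb{E}[\tilde v(X_\rho,Y_\rho)\mid\mathcal{G}_\tau]\leq\tilde v(\xi)$ for all $\rho\in[\tau,\sigma]$, so $\tilde v\in\mathcal{V}^+$, the contradiction we wanted.

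The main obstacle — and the reason the proof is longer here than in \cite{BayraktarSirbu13} — is that the above argument only exploits the \emph{first} component $\mathcal{L}\varphi\leq 0$ of the variational inequality; it says nothing about why the \emph{gradient constraints} $-(1-\mu)\varphi_x+\varphi_y\leq 0$ and $\varphi_x-(1-\lambda)\varphi_y\leq 0$ should also fail. In the singular control setting the natural "competing" move is an immediate transaction (a jump of $B$ or of $S$), and one must show that under the assumption $-3\varepsilon<0$ such a jump inside $B_\delta$ cannot decrease $\varphi+m/2$: along the buy direction $(-1,1-\lambda)$ the directional derivative of $\varphi$ is $-\varphi_x+(1-\lambda)\varphi_y\leq -(something)\varepsilon$? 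No — one instead argues that the \emph{optimal} response never needs a transaction here precisely because all three inequalities being strictly negative means no admissible control (diffuse or jump) can improve, and the frozen-diffusion control above is already admissible and already a supersolution control. So the careful point is to handle the three cases — $\mathcal{L}\varphi(x_0,y_0)<0$ failing, or one of the two gradient terms failing — by the same no-transaction comparison, checking in each case that the modified function $\tilde v$ still admits a (SP2)-control, with the transaction directions used only to guarantee that any jump the control might be forced to take (e.g.\ to restore admissibility near the boundary of the ball) keeps $\varphi+m/2$ above its image. I expect this three-case bookkeeping, together with the verification that concatenating controls via Lemma~\ref{lemma:admissibility}.ii preserves the supermartingale inequality across the gluing time $\theta$, to be where essentially all the work lies; the rest is the routine Perron skeleton plus the It\^o computation already rehearsed in Lemma~\ref{lemma:ubarpsi is a ssupsoln}.
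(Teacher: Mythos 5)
Your contradiction hypothesis has the wrong sign, and this error propagates through the whole argument. To show that $v_+$ is a viscosity \emph{sub}solution of the equation $\max\{\mathcal{L}u,\,-(1-\mu)u_x+u_y,\,u_x-(1-\lambda)u_y\}=0$, you must show that at a local max of $v_+-\varphi$ one has $\max\{\cdots\}(x_0,y_0)\leq 0$. The negation of this is $\max\{\cdots\}(x_0,y_0)>0$, i.e. \emph{at least one} of the three terms is strictly positive. You instead assume $\max\{\cdots\}=-3\varepsilon<0$, which is not the negation of anything you want to disprove; it is the hypothesis one would use when proving the \emph{supersolution} property of $v_-$ (compare Proposition~\ref{prop:v- is a vsupsoln} in the paper). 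The correct hypothesis forces a three-way case split — $\mathcal{L}\varphi(x_0,y_0)>0$, or $-(1-\mu)\varphi_x+\varphi_y>0$ at $(x_0,y_0)$, or $\varphi_x-(1-\lambda)\varphi_y>0$ there — and each case genuinely requires a different control near $(x_0,y_0)$ (do nothing; sell immediately to the boundary of the small ball; buy immediately to the boundary). Your ``frozen diffusion'' construction only makes sense in the first case, and your closing paragraph, which senses that the gradient constraints need separate treatment but waves at it, is in fact pointing at the heart of the proof rather than a side issue.

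The sign error also breaks the Perron shift. You set $m:=\min_{\partial B_\delta}(v-\varphi)$ and claim $m>0$ ``by strictness of the maximum.'' But the strict maximum of $v_+-\varphi$ gives $v_+-\varphi<0$ on $\partial B_\delta$, and since $v\geq v_+$ this tells you nothing about the sign of $v-\varphi$; in the paper one instead uses u.s.c.\ plus compactness to get $v_+-\varphi\leq-\delta$ on the annulus and then a decreasing-sequence/Dini argument to select $v\in\mathcal{V}^+$ with $v-\varphi\leq-\delta/2$ there, i.e.\ $v-\varphi$ is negative, not positive. More importantly, you shift $\varphi$ up to $\varphi+m/2$; then $\tilde v(x_0,y_0)=\min\{v(x_0,y_0),\varphi(x_0,y_0)+m/2\}\geq\varphi(x_0,y_0)=v_+(x_0,y_0)$, so even if $\tilde v\in\mathcal{V}^+$ you get no contradiction with the minimality of $v_+$. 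The paper shifts \emph{down} to $\varphi^\eta:=\varphi-\eta$ precisely so that $v^\eta(x_0,y_0)\leq\varphi^\eta(x_0,y_0)<v_+(x_0,y_0)$. Finally, the It\^o direction is also reversed: with $\mathcal{L}\varphi<0$, It\^o gives $d\varphi(X,Y)=-\mathcal{L}\varphi\,dt+(\text{mart})$ with strictly positive drift under the no-transaction control, so $\varphi(X,Y)$ is a \emph{sub}martingale, not the supermartingale you assert. It is exactly the (correct) hypothesis $\mathcal{L}\varphi>0$ that makes the drift negative and delivers the supermartingale property needed for (SP2).
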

\begin{proof}
The boundary inequalities are satisfied because $v_+\leq \ubar{\psi}$ by Lemma \ref{lemma:ubarpsi is a ssupsoln}.\footnote{In fact, equalities hold for $v_+$ on the boundary; the reverse inequalities come from the simple fact that (SP1) is preserved under pointwise infimum.} 
To show interior viscosity subsolution property, let $(x_0, y_0)\in \mathcal{S}$ and $\varphi\in C^2(\mathcal{S})$ be a test function such that $v_+-\varphi$ attains a strict local maximum of zero at $(x_0, y_0)$. We need to show
\[\max\left\{\mathcal{L}\varphi, -(1-\mu)\varphi_x+\varphi_y, \varphi_x-(1-\lambda)\varphi_y\right\}(x_0, y_0)\leq 0.\]
Assume on the contrary that 
\[\max\left\{\mathcal{L}\varphi, -(1-\mu)\varphi_x+\varphi_y, \varphi_x-(1-\lambda)\varphi_y\right\}(x_0, y_0)> 0.\]
There are three cases to consider: (i) $\mathcal{L}\varphi(x_0, y_0)>0$, (ii) $-(1-\mu)\varphi_x(x_0, y_0)+\varphi_y(x_0, y_0)>0$, and (iii) $\varphi_x(x_0, y_0)-(1-\lambda)\varphi_y(x_0, y_0)>0$. We will show that each case leads to a contradiction.

\textbf{Case (i)}. $\mathcal{L}\varphi(x_0, y_0)>0$. We can find, by continuity, a small closed ball $\ubar{B_{\epsilon}(x_0, y_0)}\subseteq\mathcal{S}$ such that 
\begin{equation*}
\mathcal{L}\varphi>0 \quad \text{on} \quad \ubar{B_{\epsilon}(x_0, y_0)}.
\end{equation*}
Since $v_+-\varphi$ is u.s.c. and $\ubar{B_{\epsilon}(x_0, y_0)}\backslash B_{\epsilon/2}(x_0, y_0)$ is compact, there exists a $\delta>0$ such that 
\[v_+-\varphi\leq -\delta \quad \text{on}\quad \ubar{B_{\epsilon}(x_0, y_0)}\backslash B_{\epsilon/2}(x_0, y_0).\]
By \cite[Proposition 4.1]{BayraktarSirbu12} and Lemma \ref{lemma_stability_ssupsoln}, $v_+$ can be approximated from above by a non-increasing sequence of stochastic supersolutions $v_n$. By \cite[Lemma 2.4]{BayraktarSirbu14}, there exists a large enough $N$ such that $v:=v_N$ satisfies
\[v-\varphi\leq -\frac{\delta}{2}\quad \text{on }\ubar{B_\epsilon(x_0, y_0)}\backslash B_{\epsilon/2}(x_0, y_0).\]
Choose $\eta\in(0,\delta/2)$ small so that $\varphi^\eta:=\varphi-\eta$ satisfies 
\begin{equation}\label{ssupsoln_eq1}
\mathcal{L}\varphi^\eta>0 \quad \text{on } \ubar{B_{\epsilon}(x_0, y_0)}.
\end{equation}
We also have
\begin{equation}\label{ssupsoln_eq2}
v\leq {\varphi}-\frac{\delta}{2}<{\varphi}-\eta=\varphi^\eta \quad \text{on }B_\epsilon(x_0, y_0)\backslash B_{\epsilon/2}(x_0, y_0),
\end{equation}
and
\begin{equation}\label{ssupsoln_eq3}
\varphi^\eta(x_0,y_0)={\varphi}(x_0,y_0)-\eta=v_+(x_0,y_0)-\eta<v_+(x_0,y_0).
\end{equation}
Define
\[v^\eta:=\begin{cases}
v\wedge \varphi^\eta &\text{ on } \ubar{B_\epsilon(x_0,y_0)},\\
v &\text{ on } \ubar{B_\epsilon(x_0,y_0)}^c.
\end{cases}\]
If we can show $v^\eta\in\mathcal{V}^+$, then \eqref{ssupsoln_eq3} will lead to a contradiction to the (pointwise) minimality of $v_+$. Clearly, $v^\eta$ is u.s.c. since the minimum of u.s.c. functions is u.s.c. and $v^\eta=v$ outside $B_{\epsilon/2}(x_0, y_0)$. Boundedness is also easy.
(SP1) is satisfied because $v^\eta=v$ on $\partial \mathcal{S}$. The remaining proof of case (i) is devoted to the verification of (SP2), i.e. the supermartingale property.

Let $(\tau,\xi)$ be any random initial condition and $(B^0,S^0)$ be the $(\tau,\xi)$-admissible control in (SP2) for the stochastic supersolution $v$. Let
\[A:=\{\xi\in B_{\epsilon/2}(x_0,y_0)\}\cap\{\varphi^\eta(\xi)<v(\xi)\}\in\mathcal{G}_\tau.\]
Define a new control
\[(B^1_t,S^1_t):=1_{A^c\cap\{t\geq\tau\}}(B^0_t-B^0_{\tau-},S^0_t-S^0_{\tau-}).\]
$(B^1, S^1)$ follows $(B^0, S^0)$ starting from time $\tau$ when the position $\xi$ satisfies $v^\eta(\xi)=v(\xi)$, i.e. when it is optimal to use the control corresponding to $v$. By Lemma \ref{lemma:admissibility}.i, $(B^1,S^1)\in\mathscr{A}(\tau,\xi)$. Let
\[\tau_1:=\inf\{t\in[\tau,\sigma^{\tau,\xi,B^1,S^1}]: (X^{\tau,\xi,B^1,S^1}_t, Y_t^{\tau,\xi,B^1,S^1})\notin B_{\epsilon/2}(x_0, y_0)\}\]
be the exit time of the ball $B_{\epsilon/2}(x_0,y_0)$ and
\[\xi_1:=(X^{\tau,\xi,B^1,S^1}_{\tau_1}, Y_{\tau_1}^{\tau,\xi,B^1,S^1})\in\mathcal{G}_{\tau_1}\]
be the exit position. Since $X^{\tau,\xi,B^1, S^1}$ and $Y^{\tau,\xi,B^1, S^1}$ are RCLL, we have $\xi_1\notin B_{\epsilon/2}(x_0, y_0)$.\footnote{If $\xi\notin B_{\epsilon/2}(x_0, y_0)$, it is possible for the process to immediately jump back to $B_{\epsilon/2}(x_0, y_0)$ at time $\tau$. In this case, although we start outside the ball, $\tau_1\neq\tau$ because $(X^{\tau,\xi,B^1,S^1}_{t},Y^{\tau,\xi,B^1,S^1}_{t})$ gives the post-jump position at time $t$ which is inside the ball at $t=\tau$, and will stay inside the ball for some positive amount of time by the right continuity of its paths.} By Lemma \ref{lemma:admissibility}.ii, $(\tau_1,\xi_1)$ is a valid random initial condition. Let $(B^2, S^2)$ be the $(\tau_1,\xi_1)$-admissible control in (SP2) for $v$. Set
\[(B_t,S_t):=(B^1_t,S^1_t)1_{\{t<\tau_1\}}+(B^2_t-B^2_{\tau_1-}+B^1_{\tau_1},S^2_t-S^2_{\tau_1-}+S^1_{\tau_1})1_{\{t\geq \tau_1\}}.\]
Note that we allow ``double transactions" at time $\tau_1$, first by $(\triangle B^1_{\tau_1}, \triangle S^1_{\tau_1})$, then by $(\triangle B^2_{\tau_1}, \triangle S^2_{\tau_1})$.
Lemma \ref{lemma:admissibility}.ii also implies $(B,S)\in\mathscr{A}(\tau,\xi)$. We now check the supermatingale property (SP2) for $v^\eta$ with control $(B,S)$.

Let $\rho$ be any $\mathbb{G}$-stopping time taking values in $[\tau,\sigma^{\tau,\xi,B,S}]$. In the event $A$, $(B^1, S^1)=0$ so that $(X^{\tau,\xi,B^1, S^1}, Y^{\tau,\xi,B^1,S^1})$ exits $B_{\epsilon/2}(x_0, y_0)$ either by diffusion or by death, giving $\xi_1\in \partial B_{\epsilon/2}(x_0, y_0)\cup \{\bfDelta\}$. The control $(B,S)$ is inactive before time $\tau_1$ and equals $(\triangle B^2_{\tau_1}, \triangle S^2_{\tau_1})$ at $\tau_1$. By It\^{o}'s formula, we have in the event $A$
\begin{align*}
&\varphi^\eta(X^{\tau,\xi,B,S}_{\rho\wedge \tau_1}, Y^{\tau,\xi,B,S}_{\rho\wedge \tau_1})-\varphi^\eta(X^{\tau, \xi, B,S}_\tau, Y^{\tau,\xi, B,S}_\tau)\\
&=\int_{\tau}^{\rho\wedge \tau_1} -\mathcal{L}\varphi^\eta(X^{\tau,\xi,B,S}_t,Y^{\tau,\xi,B,S}_t)dt+\int_\tau^{\rho\wedge \tau_1}(\varphi^\eta)'(X^{\tau,\xi,B,S}_t, Y^{\tau,\xi,B,S}_t)\sigma Y^{\tau,\xi,B,S}_t dW_t\\
&\quad+\int_\tau^{\rho\wedge \tau_1}\left[\varphi^\eta(\bfDelta)-\varphi^\eta(X^{\tau,\xi,B,S}_{t-}, Y^{\tau,\xi,B,S}_{t-})\right]d(N_t-\beta t)+1_{\{\rho\geq\tau_1\}}\left[\varphi^\eta(\xi_1+\triangle \xi)-\varphi^\eta(\xi_1)\right],
\end{align*}
where
\[\triangle \xi:=(-1,1-\lambda)\triangle B^2_{\tau_1}+(1-\mu,-1)\triangle S^2_{\tau_1}.\]
Since $(X^{\tau,\xi,B,S}_t, Y^{\tau,\xi,B,S}_t)\in B_{\epsilon/2}(x_0, y_0)$ for $\tau\leq t<\tau_1$ on $A$, and $\mathcal{L}\varphi^\eta>0$ in $B_{\epsilon/2}(x_0, y_0)$ by \eqref{ssupsoln_eq1}, the $dt$-integral is non-positive. The integrals with respect to the Brownian motion and the compensated Poisson process vanish by taking $\mathcal{G}_\tau$-conditional expectation. We therefore obtain
\begin{align*}
&\mathbb{E}[ 1_{A}\varphi^\eta (X^{\tau,\xi,B,S}_{\rho\wedge \tau_1}, Y^{\tau,\xi,B,S}_{\rho \wedge \tau_1})-1_{A\cap\{\rho\geq\tau_1\}}(\varphi^\eta(\xi_1+\triangle \xi)-\varphi^\eta(\xi_1))|\mathcal{G}_{\tau}]\\
&\leq 1_A\varphi^\eta(X^{\tau, \xi, B,S}_\tau, Y^{\tau,\xi, B,S}_\tau)=1_{A\cap\{\tau<\tau_d\}}\varphi^\eta(\xi)+1_{A\cap\{\tau=\tau_d\}}\varphi^\eta(\bfDelta)\\
&=1_A\varphi^\eta(\xi)\leq 1_Av^\eta(\xi).
\end{align*}
In the last equality, we used $\xi=\bfDelta$ if $\tau=\tau_d$. Notice that 
\begin{align*}
&1_{A}\varphi^\eta (X^{\tau,\xi,B,S}_{\rho\wedge \tau_1}, Y^{\tau,\xi,B,S}_{\rho \wedge \tau_1})-1_{A\cap\{\rho\geq\tau_1\}}(\varphi^\eta(\xi_1+\triangle \xi)-\varphi^\eta(\xi_1))\\
&=1_{A\cap\{\rho<\tau_1\}}\varphi^\eta (X^{\tau,\xi,B,S}_{\rho}, Y^{\tau,\xi,B,S}_{\rho})+1_{A\cap\{\rho\geq \tau_1\}}\varphi^\eta (\xi_1+\triangle \xi)-1_{A\cap\{\rho\geq\tau_1\}}(\varphi^\eta(\xi_1+\triangle \xi)-\varphi^\eta(\xi_1))\\
&=1_{A\cap\{\rho<\tau_1\}}\varphi^\eta (X^{\tau,\xi,B,S}_{\rho}, Y^{\tau,\xi,B,S}_{\rho})+1_{A\cap\{\rho\geq\tau_1\}}\varphi^\eta(\xi_1).
\end{align*}
So 
\[
\mathbb{E}[1_{A\cap\{\rho<\tau_1\}}\varphi^\eta (X^{\tau,\xi,B,S}_{\rho}, Y^{\tau,\xi,B,S}_{\rho})+1_{A\cap\{\rho\geq\tau_1\}}\varphi^\eta(\xi_1)|\mathcal{G}_{\tau}]\leq 1_A v^\eta(\xi).
\]
We have argued that $\xi_1\in\partial B_{\epsilon/2}(x_0, y_0)\cup\{\bfDelta\}$ on $A$. By \eqref{ssupsoln_eq2} and the definition of $v^\eta$, we know $v^\eta\leq \varphi^\eta$ in $B_{\epsilon}(x_0, y_0)$. This allows us to replace $\varphi^\eta$ by $v^\eta$ in the above inequality and get
\begin{equation}\label{ssupsoln_eq5}
\mathbb{E}[1_{A\cap\{\rho<\tau_1\}}v^\eta(X^{\tau,\xi,B,S}_{\rho}, Y^{\tau,\xi,B,S}_{\rho})+1_{A\cap\{\rho\geq\tau_1\}}v^\eta(\xi_1)|\mathcal{G}_{\tau}]\leq 1_Av^\eta(\xi).
\end{equation}

By ``optimality" of $(B^0, S^0)$ (and thus $(B^1, S^1)$ on $A^c$) for $v$ with random initial condition $(\tau,\xi)$, we have
\[\mathbb{E}[1_{A^c} v(X^{\tau,\xi, B^1, S^1}_{\rho\wedge \tau_1}, Y^{\tau,\xi, B^1, S^1}_{\rho\wedge \tau_1})|\mathcal{G}_{\tau}]\leq 1_{A^c}v(\xi).\]
Since $v^\eta\leq v$ everywhere, we can replace $v$ by $v^\eta$ on the left hand side in the above inequality. On $A^c$, either $\xi\notin B_{\epsilon/2}(x_0, y_0)$, or $\xi\in B_{\epsilon/2}(x_0, y_0)$ and $v(\xi)\leq \varphi^\eta(\xi)$. In both cases, $v(\xi)=v^\eta(\xi)$ since $v^\eta=v$ outside the ball $B_{\epsilon/2}(x_0, y_0)$. So we can also replace $v$ by $v^\eta$ on the right hand side. Splitting the set $A^c$ on the left hand side according to the relation between $\rho$ and $\tau_1$, and using the definition of $(B,S)$, we have
\begin{equation}\label{ssupsoln_eq6}
\mathbb{E}[1_{A^c\cap\{\rho<\tau_1\}} v^\eta(X^{\tau,\xi, B, S}_{\rho}, Y^{\tau,\xi, B, S}_{\rho})+1_{A^c\cap\{\rho\geq\tau_1\}} v^\eta(\xi_1)|\mathcal{G}_{\tau}]\leq 1_{A^c}v^\eta(\xi).
\end{equation}
Combining \eqref{ssupsoln_eq5} and \eqref{ssupsoln_eq6} gives us
\begin{equation}\label{ssupsoln_eq7}
\mathbb{E}[1_{\{\rho<\tau_1\}} v^\eta(X^{\tau,\xi, B, S}_{\rho}, Y^{\tau,\xi, B, S}_{\rho})+1_{\{\rho\geq\tau_1\}} v^\eta(\xi_1)|\mathcal{G}_{\tau}]\leq v^\eta(\xi).
\end{equation}

By ``optimality" of $(B^2, S^2)$ for $v$ with random initial condition $(\tau_1,\xi_1)$, we have (by applying the supermartingale property to the stopping time $\rho\vee \tau_1$)
\[\mathbb{E}[1_{\{\rho\geq\tau_1\}} v(X^{\tau_1,\xi_1, B^2, S^2}_{\rho}, Y^{\tau_1,\xi_1, B^2, S^2}_{\rho})|\mathcal{G}_{\tau_1}]\leq 1_{\{\rho\geq\tau_1\}}v(\xi_1).\]
Same as before, we can replace all $v$'s by $v^\eta$ in the above inequality because $v^\eta\leq v$ everywhere, $v=v^\eta$ outside $B_{\epsilon/2}(x_0, y_0)$ and $\xi_1$, being the exit position, is outside $B_{\epsilon/2}(x_0, y_0)$. So
\begin{align*}
\mathbb{E}[1_{\{\rho\geq\tau_1\}} v^\eta(X^{\tau,\xi, B, S}_{\rho}, Y^{\tau,\xi, B, S}_{\rho})|\mathcal{G}_{\tau_1}]&=\mathbb{E}[1_{\{\rho\geq\tau_1\}} v^\eta(X^{\tau_1,\xi_1, B^2, S^2}_{\rho}, Y^{\tau_1,\xi_1, B^2, S^2}_{\rho})|\mathcal{G}_{\tau_1}]\leq 1_{\{\rho\geq\tau_1\}}v^\eta(\xi_1).
\end{align*}
Taking $\mathcal{G}_{\tau}$-condition expectation and using tower property yields
\begin{equation}\label{ssupsoln_eq8}
\mathbb{E}[1_{\{\rho\geq\tau_1\}} v^\eta(X^{\tau,\xi, B, S}_{\rho}, Y^{\tau,\xi, B, S}_{\rho})-1_{\{\rho\geq\tau_1\}}v^\eta(\xi_1)|\mathcal{G}_{\tau}]\leq 0.
\end{equation}
Finally, we add \eqref{ssupsoln_eq7} and \eqref{ssupsoln_eq8} to get
\[\mathbb{E}[v^\eta(X^{\tau,\xi, B, S}_{\rho}, Y^{\tau,\xi, B, S}_{\rho})|\mathcal{G}_{\tau}]\leq v^\eta(\xi).\]
This completes the proof of (SP2) and hence of case (i).

\textbf{Case (ii).} $-(1-\mu)\varphi_x(x_0, y_0)+\varphi_y(x_0, y_0)>0$. The proof is in most part similar to that of case (i). So we shall be brief on the similar parts. Same as in case (i), we can find $\epsilon, \eta>0$ and $v\in \mathcal{V}^+$ such that $\varphi^\eta:=\varphi-\eta$ satisfies
\begin{equation}\label{ssupsoln_ii_eq1}
-(1-\mu)\varphi^\eta_x+\varphi^\eta_y>0 \quad \text{on} \quad \ubar{B_{\epsilon}(x_0, y_0)},
\end{equation}
\begin{equation*}
v\leq \varphi^\eta \quad \text{on} \quad \ubar{B_\epsilon(x_0, y_0)}\backslash B_{\epsilon/2}(x_0, y_0),
\end{equation*}
\begin{equation*}
 \varphi^\eta(x_0, y_0)<v_+(x_0, y_0).
\end{equation*}
Define
\[v^\eta:=\begin{cases}
v\wedge \varphi^\eta &\text{ on } \ubar{B_\epsilon(x_0,y_0)},\\
v &\text{ on } \ubar{B_\epsilon(x_0,y_0)}^c.
\end{cases}\]
It suffices to show $v^\eta\in\mathcal{V}^+$. And the only nontrivial part is to check $v^\eta$ satisfies (SP2). 

Let $(\tau,\xi)$ be any random initial condition and $(B^0,S^0)$ be a $(\tau,\xi)$-admissible control in (SP2) for the stochastic supersolution $v$. Let
\[A:=\{\xi\in B_{\epsilon/2}(x_0,y_0)\}\cap\{\varphi^\eta(\xi)<v(\xi)\}\in\mathcal{G}_\tau.\]
Observe that \eqref{ssupsoln_ii_eq1} implies for any $(x,y)\in B_{\epsilon}(x_0, y_0)$ and $h>0$ small such that $(x+(1-\mu)h, y-h)\in B_{\epsilon}(x_0, y_0)$, we have 
\begin{equation}\label{ssupsoln_ii_eq2}
\varphi^\eta(x+(1-\mu)h, y-h)-\varphi^\eta(x,y)=h [(1-\mu)\varphi^\eta_x-\varphi^\eta_y](x+(1-\mu)h', y-h')<0
\end{equation}
for some $h'\in(0,h)$ by Mean Value Theorem. This suggests selling stocks is optimal on the set $A$. Given a point $(x,y)\in B_{\epsilon/2}(x_0, y_0)$, denote by $\mathfrak{s}(x,y)=(\mathfrak{s}^0(x,y), \mathfrak{s}^1(x,y))$ the intersection of the ray $\{(x+(1-\mu)h, y-h): h\geq 0\}$ and $\partial B_{\epsilon/2}(x_0, y_0)$, i.e. the unique point on $\partial B_{\epsilon/2}(x_0, y_0)$ that can be reached by a sell. Define a new control
\[(B^1_t,S^1_t):=1_{A\cap\{t\geq\tau\}}(0,\xi^1-\mathfrak{s}^1(\xi))+1_{A^c\cap\{t\geq\tau\}}(B^0_t-B^0_{\tau-},S^0_t-S^0_{\tau-}).\]
$(B^1, S^1)$ says starting at time $\tau$, if we are in $A$, we immediately jump to $\partial B_{\epsilon/2}(x_0, y_0)$ by a sell and do nothing afterwards; if we are in $A^c$, we follow $(B^0, S^0)$. A slight variation of Lemma \ref{lemma:admissibility}.i shows $(B^1,S^1)$ is $(\tau,\xi)$-admissible. Let
\[\tau_1:=\inf\{t\in[\tau,\sigma^{\tau,\xi,B^1,S^1}]: (X^{\tau,\xi,B^1,S^1}_t, Y_t^{\tau,\xi,B^1,S^1})\notin B_{\epsilon/2}(x_0, y_0)\}\]
be the exit time of the ball $B_{\epsilon/2}(x_0,y_0)$ and
\[\xi_1:=(X^{\tau,\xi,B^1,S^1}_{\tau_1}, Y_{\tau_1}^{\tau,\xi,B^1,S^1})\in\mathcal{G}_{\tau_1}\]
be the exit position. As in case (i), $\xi_1\notin B_{\epsilon/2}(x_0, y_0)$ and $(\tau_1,\xi_1)$ is a valid random initial condition. Also notice that on $A$, $\tau_1=\tau$ and $\xi_1=\mathfrak{s}(\xi)$ if $\tau<\tau_d$. 
Let $(B^2, S^2)$ be a $(\tau_1,\xi_1)$-admissible control in (SP2) for $v$. Set
\[(B_t,S_t):=(B^1_t,S^1_t)1_{\{ t<\tau_1\}}+(B^2_t-B^2_{\tau_1-}+B^1_{\tau_1},S^2_t-S^2_{\tau_1-}+S^1_{\tau_1})1_{\{t\geq \tau_1\}}.\]
$(B,S)\in\mathscr{A}(\tau,\xi)$ by Lemma \ref{lemma:admissibility}.ii. It remains to check (SP2) for $v^\eta$ with control $(B,S)$.

Let $\rho$ be any $\mathbb{G}$-stopping time taking values in $[\tau,\sigma^{\tau,\xi,B,S}]$. In the event $A$ (recall that $\tau_1=\tau$), when $\tau<\tau_d$, \eqref{ssupsoln_ii_eq2} implies $\varphi^\eta(\xi_1)=\varphi^\eta(\mathfrak{s}(\xi))<\varphi^\eta(\xi)$; when $\tau=\tau_d$, $\varphi^\eta(\xi_1)=\varphi^\eta(\xi)=\varphi^\eta(\bfDelta)=0$. So
\begin{equation}\label{ssupsoln_ii_eq3}
1_{A} v^\eta(\xi_1) \leq 1_{A}\varphi^\eta(\xi_1)<1_{A}\varphi^\eta(\xi)=1_{A} v^\eta(\xi).
\end{equation}
In the event $A^c$, we use that $(B^0, S^0)$ is ``optimal" for $v$ to obtain
\begin{equation}\label{ssupsoln_ii_eq4}
\begin{aligned}
&\mathbb{E}[1_{A^c\cap\{\rho<\tau_1\}} v^\eta(X^{\tau,\xi, B^1, S^1}_{\rho}, Y^{\tau,\xi, B^1, S^1}_{\rho})+1_{A^c\cap\{\rho\geq \tau_1\}}v^\eta (\xi_1)|\mathcal{G}_{\tau}]\\
&=\mathbb{E}[1_{A^c} v^\eta(X^{\tau,\xi, B^1, S^1}_{\rho\wedge \tau_1}, Y^{\tau,\xi, B^1, S^1}_{\rho\wedge \tau_1})|\mathcal{G}_{\tau}]\leq\mathbb{E}[1_{A^c} v(X^{\tau,\xi, B^1, S^1}_{\rho\wedge \tau_1}, Y^{\tau,\xi, B^1, S^1}_{\rho\wedge \tau_1})|\mathcal{G}_{\tau}]\\
&=\mathbb{E}[1_{A^c} v(X^{\tau,\xi, B^0, S^0}_{\rho\wedge \tau_1}, Y^{\tau,\xi, B^0, S^0}_{\rho\wedge \tau_1})|\mathcal{G}_{\tau}]\leq 1_{A^c}v(\xi)=1_{A^c}v^\eta(\xi).
\end{aligned}
\end{equation}
Combining \eqref{ssupsoln_ii_eq3} and \eqref{ssupsoln_ii_eq4}, and using that $(B, S)$ equals $(B^1, S^1)$ on $[\tau,\tau_1)$, we get
\begin{equation}\label{ssupsoln_ii_eq5}
\mathbb{E}[1_{\{\rho<\tau_1\}} v^\eta(X^{\tau,\xi, B, S}_{\rho}, Y^{\tau,\xi, B, S}_{\rho})+1_{\{\rho\geq \tau_1\}}v^\eta (\xi_1)|\mathcal{G}_{\tau}]\leq v^\eta(\xi).
\end{equation}
By ``optimality" of $(B^2, S^2)$ for $v$ with random initial condition $(\tau_1,\xi_1)$, we have
\begin{equation*}
\begin{aligned}
\mathbb{E}[1_{\{\rho\geq\tau_1\}} v^\eta(X^{\tau,\xi, B, S}_{\rho}, Y^{\tau,\xi, B, S}_{\rho})|\mathcal{G}_{\tau_1}]&=\mathbb{E}[1_{\{\rho\geq\tau_1\}} v^\eta(X^{\tau_1,\xi_1, B^2, S^2}_{\rho}, Y^{\tau_1,\xi_1, B^2, S^2}_{\rho})|\mathcal{G}_{\tau_1}]\\
&\leq\mathbb{E}[1_{\{\rho\geq\tau_1\}} v(X^{\tau_1,\xi_1, B^2, S^2}_{\rho}, Y^{\tau_1,\xi_1, B^2, S^2}_{\rho})|\mathcal{G}_{\tau_1}]\\
&\leq 1_{\{\rho\geq\tau_1\}}v(\xi_1)=1_{\{\rho\geq \tau_1\}}v^\eta(\xi_1).
\end{aligned}
\end{equation*}
Taking $\mathcal{G}_{\tau}$-condition expectation yields
\begin{equation}\label{ssupsoln_ii_eq6}
\mathbb{E}[1_{\{\rho\geq\tau_1\}} v^\eta(X^{\tau,\xi, B, S}_{\rho}, Y^{\tau,\xi, B, S}_{\rho})-1_{\{\rho\geq\tau_1\}}v^\eta(\xi_1)|\mathcal{G}_{\tau}]\leq 0.
\end{equation}
Finally, we add \eqref{ssupsoln_ii_eq5} and \eqref{ssupsoln_ii_eq6} to get
\[\mathbb{E}[v^\eta(X^{\tau,\xi, B, S}_{\rho}, Y^{\tau,\xi, B, S}_{\rho})|\mathcal{G}_{\tau}]\leq v^\eta(\xi).\]
This completes the proof of case (ii).

\textbf{Case (iii).} $\varphi_x(x_0, y_0)-(1-\lambda)\varphi_y(x_0, y_0)>0$. This case is symmetric to case (ii). Buying stock is optimal in a neighborhood of $(x_0, y_0)$. We define the set $A$ and the ``optimal" $(\tau,\xi)$-admissible control in the same way as in case (ii) except one modification: in the definition of $(B^1, S^1)$, $(0,\xi^1-\mathfrak{s}^1(\xi))$ is replaced by $(\xi^0-\mathfrak{b}^0(\xi),0)$, where for $(x,y)\in B_{\epsilon/2}(x,y)$, $\mathfrak{b}(x,y)$ is defined to be the intersection of the ray $\{x-h, y+(1-\lambda)h: h\geq 0\}$ and $\partial B_{\epsilon/2}(x_0, y_0)$, i.e. the unique point on $\partial B_{\epsilon/2}(x_0, y_0)$ that can be reached by a buy. The rest of the argument is almost the same.
\end{proof}

\section{Stochastic subsolution}\label{sec:ssubsoln}

\begin{defn}\label{defn_ssubsoln}
A bounded l.s.c. function $v$ on $\ubar{\mathcal{S}}$ is called a \textit{stochastic subsolution} of \eqref{HJB}, \eqref{BC} if 
\begin{itemize}
\item[(SB1)] $v\leq 1$ on $\partial\mathcal{S}_b$, $v\leq 0$ on $\partial\mathcal{S}_{c/r}$;
\item[(SB2)] for any random initial condition $(\tau,\xi)$, control pair $(B,S)\in\mathscr{A}(\tau,\xi)$ and $\mathbb{G}$-stopping time $\rho\in[\tau, \sigma^{\tau,\xi,B,S}]$,
\begin{equation*}\label{supermart}
 \mathbb{E}[v(X^{\tau,\xi,B,S}_\rho, Y^{\tau,\xi,B,S}_\rho)|\mathcal{G}_\tau]\geq v(\xi),
\end{equation*}
where $v$ is understood to be its extension to $\ubar{\mathcal{S}}\cup\{\bfDelta\}$.
\end{itemize}
Denote the set of stochastic subsolutions by $\mathcal{V}^-$.
\end{defn}

\begin{remark}
$\mathcal{V}^-\neq\emptyset$ since the constant $0\in\mathcal{V}^-$. Similar to the stochastic supersolution case, there is also a  member of $\mathcal{V}^-$ which satisfies $(SB1)$ with equalities, namely, the lower bound function $\lbar{\psi}$ defined in \eqref{lowerbdd}. (See Lemma \ref{lemma:lbarpsi is a ssubsoln}.)
\end{remark}

\begin{remark}\label{rmk:ssubsoln<=psi}
Any stochastic subsolution $v$ is dominated by the value function $\psi$ on $\ubar{\mathcal{S}}$. Indeed, on $\partial\mathcal{S}$, we clearly have $v\leq \psi$ by (SP1). For $(x,y)\in\mathcal{S}$, take $\tau=0$, $\xi=(x,y)$, $(B,S)$ be any $(x,y)$-admissible control, and $\rho=\sigma^{x,y,B,S}$. We have by (SB2) and (SB1) that
\begin{align*}
v(x,y)&\leq \mathbb{E}[v(X^{x,y,B,S}_\rho, Y^{x,y,B,S}_\rho)]\leq \mathbb{E}\left[1_{\{\rho= \tau_b^{x,y,B,S}\}}\right]\\
&=\mathbb{P}(\tau_b^{x,y,B,S}<\tau_d\wedge \tau_{s}^{x,y,B,S})\leq \mathbb{P}(\tau_b^{x,y,B,S}<\tau_d).
\end{align*}
 Since this holds for any $(B,S)\in\mathscr{A}(x,y)$, taking infimum yields
 \[v(x,y)\leq \inf_{(B,S)\in\mathscr{A}(x,y)} \mathbb{P}(\tau_b^{x,y,B,S}<\tau_d)=\psi(x,y). \]
\end{remark}

\begin{lemma}\label{lemma:stability_ssubsoln}
Let $v_1, v_2\in\mathcal{V}^-$. Then $v_1\vee v_2\in\mathcal{V}^-$.
\end{lemma}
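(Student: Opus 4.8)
\textbf{Proof proposal for Lemma~\ref{lemma:stability_ssubsoln}.}
The plan is to verify directly that $v:=v_1\vee v_2$ satisfies the two conditions (SB1) and (SB2) in Definition~\ref{defn_ssubsoln}. The structural properties are immediate: the pointwise maximum of two bounded l.s.c. functions on $\ubar{\mathcal{S}}$ is again bounded and l.s.c., and (SB1) is clearly preserved under pointwise maximum, since $v_i\leq 1$ on $\partial\mathcal{S}_b$ and $v_i\leq 0$ on $\partial\mathcal{S}_{c/r}$ for $i=1,2$ force the same inequalities for their maximum. So the only real content is (SB2).

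For (SB2), note the key asymmetry with Lemma~\ref{lemma_stability_ssupsoln}: the submartingale-type inequality in (SB2) is required to hold for \emph{every} admissible control $(B,S)$ and \emph{every} stopping time $\rho\in[\tau,\sigma^{\tau,\xi,B,S}]$, so there is no need to splice controls together — we can simply fix an arbitrary such $(B,S)$ and $\rho$ and work with it directly. First I would fix a random initial condition $(\tau,\xi)$, a control $(B,S)\in\mathscr{A}(\tau,\xi)$, and a $\mathbb{G}$-stopping time $\rho\in[\tau,\sigma^{\tau,\xi,B,S}]$. Applying (SB2) for $v_1$ and then for $v_2$ with this same data gives
\[
\mathbb{E}[v_i(X^{\tau,\xi,B,S}_\rho, Y^{\tau,\xi,B,S}_\rho)\mid\mathcal{G}_\tau]\geq v_i(\xi),\qquad i=1,2.
\]
Since $v=v_1\vee v_2\geq v_i$ pointwise on $\ubar{\mathcal{S}}\cup\{\bfDelta\}$ (the extension still satisfies $v(\bfDelta)=v_1(\bfDelta)\vee v_2(\bfDelta)=0$), monotonicity of conditional expectation yields
\[
\mathbb{E}[v(X^{\tau,\xi,B,S}_\rho, Y^{\tau,\xi,B,S}_\rho)\mid\mathcal{G}_\tau]\geq \mathbb{E}[v_i(X^{\tau,\xi,B,S}_\rho, Y^{\tau,\xi,B,S}_\rho)\mid\mathcal{G}_\tau]\geq v_i(\xi)
\]
for each $i$. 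Taking the maximum over $i=1,2$ on the right-hand side gives
\[
\mathbb{E}[v(X^{\tau,\xi,B,S}_\rho, Y^{\tau,\xi,B,S}_\rho)\mid\mathcal{G}_\tau]\geq v_1(\xi)\vee v_2(\xi)=v(\xi),
\]
which is exactly (SB2) for $v$.

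There is essentially no obstacle here — unlike the supersolution stability lemma, where one has to select between the two controls on a $\mathcal{G}_\tau$-measurable partition and then invoke Lemma~\ref{lemma:admissibility}.i to preserve admissibility, the subsolution inequality is a ``for all controls'' statement that is trivially stable under taking maxima. The only minor point worth a sentence is to confirm that the extension convention $v(\bfDelta)=0$ is consistent with taking the maximum, i.e. that the extension of $v_1\vee v_2$ agrees with the maximum of the extensions; this is immediate since both extensions assign value $0$ at $\bfDelta$. I would write this lemma's proof in just a few lines.
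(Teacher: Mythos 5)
Your proof is correct and is essentially identical to the paper's: the paper likewise notes stability of boundedness, l.s.c.\ and (SB1) under pointwise maximum, and establishes (SB2) by the same two-line monotonicity argument $\mathbb{E}[(v_1\vee v_2)(\cdot)\mid\mathcal{G}_\tau]\geq\mathbb{E}[v_i(\cdot)\mid\mathcal{G}_\tau]\geq v_i(\xi)$ for $i=1,2$. Your added remarks on why the ``for all controls'' quantifier makes this easier than Lemma~\ref{lemma_stability_ssupsoln}, and on the consistency of the $\bfDelta$-extension, are accurate but not strictly needed.
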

\begin{proof}
The maximum of bounded l.s.c. functions is still bounded and l.s.c.. (SB1) is clearly stable under maximum. For (SB2), simply notice that
\[\mathbb{E}[(v_1\vee v_2)(X^{\tau,\xi,B,S}_\rho, Y^{\tau,\xi,B,S}_\rho)|\mathcal{G}_\tau]\geq \mathbb{E}[v_i(X^{\tau,\xi,B,S}_\rho, Y^{\tau,\xi,B,S}_\rho)|\mathcal{G}_\tau]\geq v_i(\xi), \ i=1,2.\]
So
\[\mathbb{E}[(v_1\vee v_2)(X^{\tau,\xi,B,S}_\rho, Y^{\tau,\xi,B,S}_\rho)|\mathcal{G}_\tau]\geq (v_1\vee v_2)(\xi).\]
\end{proof}
\begin{remark}\label{rmk:stability_ssubsoln}
The above proof can be easily generalized to the countable case. In particular, the supremum of a countable family of stochastic subsolutions is bounded from above because every stochastic subsolution is dominated by the value function. In fact, it also generalizes to the uncountable case by \cite[Proposition 4.1]{BayraktarSirbu12} which says the supremum of an uncountable family of l.s.c. functions equals the supremum over some countable subfamily.
\end{remark}

\begin{lemma}\label{lemma:lbarpsi is a ssubsoln}
$\lbar{\psi}\in\mathcal{V}^-$.
\end{lemma}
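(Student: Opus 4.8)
\textbf{Proof plan for Lemma \ref{lemma:lbarpsi is a ssubsoln}.}

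The plan is to verify the two defining properties of a stochastic subsolution for $\lbar{\psi}$. Property (SB1) is immediate: we already observed in the text that $\lbar{\psi}$ satisfies the boundary conditions \eqref{BC} with equality, so in particular $\lbar\psi=1$ on $\partial\mathcal{S}_b$ and $\lbar\psi=0$ on $\partial\mathcal{S}_{c/r}$. The substance is (SB2), i.e. the submartingale-type inequality along an \emph{arbitrary} admissible control. Because $\lbar{\psi}=\sup_k \psi_k$ and the supermartingale property of the max of two l.s.c. functions is inherited from either one (as in Lemma \ref{lemma:stability_ssubsoln} and Remark \ref{rmk:stability_ssubsoln}, which cover the supremum case), it suffices to prove that each $\psi_k$, $k\in[1-\mu,\frac{1}{1-\lambda}]$, satisfies (SB2); then $\lbar{\psi}=\sup_k \psi_k$ does too. (Boundedness of the supremum is guaranteed since each $\psi_k\le\psi\le 1$, cf. Remark \ref{rmk:stability_ssubsoln}.)

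Fix $k\in[1-\mu,\tfrac{1}{1-\lambda}]$ and set $g(x,y):=x+ky$, so $\psi_k(x,y)=h(g(x,y))$ with $h(z)=\big(\tfrac{c-rz}{c-rb}\big)^d$ for $b\le z\le c/r$ and $h(z)=0$ for $z\ge c/r$, and $\psi_k(\bfDelta)=0$. The key computation is to track the one-dimensional process $Z_t:=g(X^{\tau,\xi,B,S}_t,Y^{\tau,\xi,B,S}_t)$ before death. From \eqref{X}--\eqref{Y}, for $\tau\le t<\tau_d$,
\[
dZ_t=(rX_t-c+\alpha k Y_t)\,dt+\sigma k Y_t\,dW_t+\big[(1-\lambda)k-1\big]dB_t+\big[(1-\mu)-k\big]dS_t .
\]
Because $k\ge 1-\mu$ the $dS_t$-coefficient is $\le 0$, and because $k\le \tfrac{1}{1-\lambda}$ the $dB_t$-coefficient is $\le 0$; hence transactions can only \emph{decrease} $Z$, which is exactly the statement that trading at a frictionless price inside the bid-ask spread is never better than doing nothing for the value $\psi_k$. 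Now apply It\^o's formula to $\psi_k$, i.e. to $h(Z_t)$, up to $\rho\wedge\tau_d$, using that $h\in C^2$ on $[b,c/r)$, is $C^1$ and convex across $c/r$ (so that a standard generalized-It\^o / convexity argument handles the kink — the second-derivative term at $c/r$ contributes with a favorable sign), and incorporating the jump to $\bfDelta$ at $\tau_d$ via the compensated Poisson integral. The drift of $h(Z_t)$ is $-\mathcal{L}\psi_k$ in the no-transaction directions; since $\psi_k$ is (by its very construction from \cite{Young04}) a classical solution of $\mathcal{L}\psi_k=0$ on $\{b<g<c/r\}$ with $\mathcal{L}\psi_k\ge 0$ across the safe level, and since the $dB_t,dS_t$ contributions to $h(Z_t)$ are $\le 0$ by the sign analysis above together with $h$ non-increasing, the finite-variation part of $h(Z_t)$ is non-increasing. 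Taking $\mathcal G_\tau$-conditional expectation kills the Brownian and compensated-jump martingale parts, yielding $\mathbb{E}[\psi_k(X_\rho,Y_\rho)\mid\mathcal G_\tau]\ge \psi_k(\xi)$ on $\{\tau<\tau_d\}$; on $\{\tau=\tau_d\}$ both sides are $0$. This is (SB2) for $\psi_k$, and passing to the supremum over $k$ gives (SB2) for $\lbar\psi$.

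The main obstacle is the non-smoothness of $\psi_k$ at the safe level $g=c/r$: $\psi_k$ is only $C^1$ there, not $C^2$, so one cannot apply It\^o's formula naively. I expect to handle this exactly as in the frictionless literature: either approximate $h$ from below by smooth concave/convex modifications and pass to the limit, or invoke the It\^o--Meyer formula for convex functions (note $h$ is convex, being the composition of a convex decreasing power with the affine $z$, flattened to $0$ beyond $c/r$), so that the local-time / second-order term at $c/r$ carries the ``correct'' sign and only strengthens the inequality. A secondary point to verify carefully is the bookkeeping of the two possible transaction types at a single time (the constraint that $\triangle B_t$ and $\triangle S_t$ are not both positive) and of the immediate transaction allowed at time $\tau$ itself; but since each jump in $B$ or $S$ moves $Z$ by a non-positive amount and $h$ is non-increasing, each jump only decreases $h(Z)$, so this causes no difficulty. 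Finally, one should note the inequality is required along \emph{every} admissible control, which is what makes $\psi_k$ (and hence $\lbar\psi$) a legitimate \emph{sub}solution rather than a supersolution — no optimization over controls is needed here, in contrast to Lemma \ref{lemma:ubarpsi is a ssupsoln}.
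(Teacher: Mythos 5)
Your overall plan is reasonable but diverges from the paper's proof in a way that creates real technical work you have not addressed, and it also contains a factual error. The paper does \emph{not} apply It\^o (or It\^o--Meyer) directly to $h(Z_t)$ where $Z_t=X_t+kY_t$ carries the singular-control increments. Instead, after noting that the $dB$- and $dS$-coefficients of $Z$ are nonpositive, it introduces a dominating process $\widetilde Z$ that solves the same SDE \emph{without} any control terms; since $f=h$ is decreasing, $f(Z_\rho)\ge f(\widetilde Z_\rho)$, and all subsequent work is done on $\widetilde Z$. This decoupling pays off twice: (a) it eliminates the singular-control jumps, so ordinary It\^o suffices; (b) the kink at $z=c/r$ is handled not via convexity/local time but by stopping $\widetilde Z$ at the first time $\nu$ it reaches $[c/r,\infty)$ and using $f\ge 0$, $f\equiv 0$ on $[c/r,\infty)$ — which also confines the path to $\{\widetilde Z\le c/r\}$, where (crucially) $Y$ is \emph{bounded} provided $k\in(1-\mu,\tfrac1{1-\lambda})$ strictly, so the stochastic integrals are genuine martingales. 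Your It\^o--Meyer route is plausible in spirit (the local-time term at $c/r$ has the right sign by convexity, and the control jump terms are favorable), but you have not addressed why the $dW$-integral and the compensated-Poisson integral are true martingales rather than local martingales, and you allow $k\in[1-\mu,\tfrac1{1-\lambda}]$ including the endpoints, where the boundedness of $Y$ on $\{x+ky\le c/r\}\cap\ubar{\mathcal S}$ fails; the paper deliberately works with the open interval (recovering the endpoints via continuity of $k\mapsto\psi_k$). Two further inaccuracies: the assertion that ``$\psi_k$ is a classical solution of $\mathcal L\psi_k=0$ on $\{b<g<c/r\}$'' is false — since the frictionless HJB has an infimum over $\pi$ and $\pi=kY_t$ is not the minimizer in general, one only gets $\mathcal L\psi_k\le 0$, which happens to be exactly the sign needed and is what the paper uses; and your sentence ``the finite-variation part of $h(Z_t)$ is non-increasing'' should read \emph{non-decreasing} to be consistent with the submartingale conclusion that follows it.
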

\begin{proof}
Recall that $\lbar{\psi}$ can be written as the supremum of all $\psi_k$'s with $k\in(1-\mu,\frac{1}{1-\lambda})\cap\mathbb{Q}$ where $\psi_k$ is defined in \eqref{eq:psi_k}. To show $\lbar{\psi}\in\mathcal{V}^-$, it suffices to show $\psi_k\in\mathcal{V}^-$ for $k\in(1-\mu,\frac{1}{1-\lambda})$ by Lemma \ref{lemma:stability_ssubsoln} and the remark after it. To see (SB2) holds for $\psi_k$, let $(\tau,\xi)$ be any random initial condition, $(B,S)$ be any $(\tau,\xi)$-admissible control and $\rho\in[\tau, \sigma^{\tau,\xi,B,S}]$ be any $\mathbb{G}$-stopping time. For brevity, we shall omit the superscripts $(\tau,\xi,B,S)$ in all controlled processes and relevant stopping times in the rest of this proof. For functions defined on $[b,\infty)$, we extend them to $[b,\infty)\cup \{\bfDelta\}$ by assigning zero to the function value at $\bfDelta$. Define a new process
\[Z_t:=\begin{cases}
X_t+kY_t,& t<\tau_d,\\
\bfDelta, & t\geq \tau_d.
\end{cases}\]
Observe that $Z_t\in [b,\infty)\cup \{\bfDelta\}$ for all $t\in[\tau,\rho]$. We also have
\[dZ_t=(rZ_t+(\alpha-r) kY_t -c)dt+\sigma k Y_t dW_t+[k(1-\lambda)-1]dB_t+(1-\mu-k)dS_t, \quad t<\tau_d.\]
Since $1-\mu<k< \frac{1}{1-\lambda}$, the $dB$ and $dS$ terms are non-positive. 
So for $t\geq \tau$, $Z_t$ is bounded above by the process $\widetilde{Z}_t$ defined by 
\[d\widetilde{Z}_t=(r\widetilde{Z}_t+(\alpha-r) kY_t -c)dt+\sigma k Y_t dW_t, \quad \widetilde{Z}_{\tau}=\xi^0+k\xi^1 \ \text{ for } t<\tau_d,\]
and $\widetilde{Z}_t=\bfDelta$ for $t\geq \tau_d$. $\widetilde{Z}_t$ is the wealth process if the amount invested in the (frictionless) stock market is $kY_t$.
Let $f(x):=\psi_k(x,0)\in C^1[b,c/r]\cap C^2[b,c/r)$. We have $\psi_k(x,y)=f(x+ky)$. Since $f$ is decreasing in $[b,\infty)$, we deduce
\begin{equation}\label{eq:phi_k:1}
\mathbb{E}[\psi_k(X_\rho, Y_\rho)|\mathcal{G}_\tau]=\mathbb{E}[1_{\{\rho<\tau_d\}}f(Z_\rho)|\mathcal{G}_\tau]\geq \mathbb{E}[1_{\{\rho<\tau_d\}}f(\widetilde{Z}_\rho)|\mathcal{G}_\tau]=\mathbb{E}[f(\widetilde{Z}_\rho)|\mathcal{G}_\tau].
\end{equation}
In the event  $A:=\{\widetilde{Z}_{\tau}\in[c/r,\infty)\cup\{\bfDelta\}\}\in\mathcal{G}_\tau$, we have $f(\widetilde{Z}_\rho)\geq 0=f(\widetilde{Z}_\tau)$. In the event $A^c:=\{\widetilde{Z}_{\tau}\in [b,c/r)\}$, we let $\nu:=\inf\{t\geq 0: \widetilde{Z}_{\rho}\in [c/r,\infty)\}$, and use $f$ is non-negative in $[b,\infty)$ and zero in $[c/r,\infty)$ to get $f(\widetilde{Z}_\rho)\geq f(\widetilde{Z}_{\rho\wedge\nu})$. We therefore have
\begin{equation}\label{eq:phi_k:2}
\mathbb{E}[f(\widetilde{Z}_\rho)|\mathcal{G}_\tau]\geq \mathbb{E}[1_Af(\widetilde{Z}_\tau)+1_{A^c}f(\widetilde{Z}_{\rho\wedge\nu})|\mathcal{G}_\tau].
\end{equation}
In the event $A^c$, we use It\^{o}'s formula to obtain
\begin{align*}
f(\widetilde{Z}_{\rho\wedge\nu})&=f(\widetilde{Z}_\tau)+\int_\tau^{\rho\wedge\nu} \left\{f'(\widetilde{Z}_t)[r\widetilde{Z}_t+(\alpha-r) kY_t -c]+\frac{1}{2}f''(\widetilde{Z}_t)\sigma^2 (kY_t)^2-\beta f(\widetilde{Z}_t)\right\} dt\\
&\hspace{1.5cm} +\int_\tau^{\rho\wedge\nu} f'(\widetilde{Z}_t) \sigma kY_t dW_t +\int_\tau^{\rho\wedge \nu} [f(\bfDelta)-f(\widetilde{Z}_{t-})] d(N_t-\beta t).
\end{align*}
Notice that $f$ is the frictionless value function which satisfies the HJB equation
\[\beta f(x)=\inf_{\pi}\left\{\frac{1}{2}f''(x)\pi^2+(\alpha-r)f'(x)\pi+(rx-c)f'(x)\right\}\]
in $[b,c/r)$. It follows that the drift term is non-negative. For $t\in[\tau,\rho\wedge\nu]$, the process $\widetilde{Z}_t\in [b,c/r]$. So $Z_t\in [b,c/r]$ and the process $(X_t,Y_t)$ stays inside the bounded set $\{(x,y)\in\ubar{\mathcal{S}} :x+ky\leq c/r\}$. Here it is crucial that $k\in(1-\mu,\frac{1}{1-\lambda})$ for $Y_t$ to be bounded. The integrals with respect to the martingales $W_t$ and $N_t-\beta t$ then vanish upon taking $\mathcal{G}_\tau$-conditional expectation. This leads to
\begin{equation}\label{eq:phi_k:3}
\mathbb{E}[1_{A^c}f(\widetilde{Z}_{\rho\wedge\nu})|\mathcal{G}_\tau]\geq \mathbb{E}[1_{A^c}f(\widetilde{Z}_\tau)|\mathcal{G}_\tau].
\end{equation}
Putting \eqref{eq:phi_k:1}, \eqref{eq:phi_k:2} and \eqref{eq:phi_k:3} together, we get
\[\mathbb{E}[\psi_k(X_\rho, Y_\rho)|\mathcal{G}_\tau]\geq f(\widetilde{Z}_\tau)=1_{\{\tau<\tau_d\}}f(\xi^0+k\xi^1)=1_{\{\tau<\tau_d\}}\psi_k(\xi)=\psi_k(\xi)\]
which is the desired submartingale property.
\end{proof}

\begin{prop}\label{prop:v- is a vsupsoln}
The \textit{lower stochastic envelope}
\[v_-(x,y):=\sup_{v\in \mathcal{V}^-} v(x,y)\]
is a viscosity supersolution of \eqref{HJB} satisfying $v_-\geq 1$ on $\partial \mathcal{S}_b$ and $v_-\geq 0$ on $\partial \mathcal{S}_{c/r}$. 
\end{prop}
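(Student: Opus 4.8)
The plan is to mirror the structure of Proposition~\ref{prop:v+ is a vsubsoln}, but with all inequalities reversed and the test-function argument adapted to the supersolution case. First I would note that the boundary inequalities $v_-\geq 1$ on $\partial\mathcal{S}_b$ and $v_-\geq 0$ on $\partial\mathcal{S}_{c/r}$ are immediate: by Lemma~\ref{lemma:lbarpsi is a ssubsoln} we have $\lbar{\psi}\in\mathcal{V}^-$, hence $v_-\geq \lbar{\psi}$, and $\lbar{\psi}$ satisfies (SB1) with equality on the boundary; the reverse inequality is just (SB1) which is preserved under pointwise supremum. For the interior viscosity supersolution property, fix $(x_0,y_0)\in\mathcal{S}$ and a test function $\varphi\in C^2(\mathcal{S})$ such that $v_--\varphi$ attains a strict local minimum of zero at $(x_0,y_0)$, and assume for contradiction that
\[\max\left\{\mathcal{L}\varphi, -(1-\mu)\varphi_x+\varphi_y, \varphi_x-(1-\lambda)\varphi_y\right\}(x_0,y_0)<0.\]
Unlike the subsolution case, here \emph{all three} quantities are simultaneously strictly negative, so there is no case split: by continuity we can find a small closed ball $\ubar{B_\epsilon(x_0,y_0)}\subseteq\mathcal{S}$ on which all three remain negative.

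Next I would use the analogue of \cite[Proposition 4.1]{BayraktarSirbu12} and Lemma~\ref{lemma:stability_ssubsoln} (with Remark~\ref{rmk:stability_ssubsoln}) to approximate $v_-$ from below by a nondecreasing sequence of stochastic subsolutions, pick one member $v=v_N$ that stays strictly below $\varphi$ on the annulus $\ubar{B_\epsilon(x_0,y_0)}\backslash B_{\epsilon/2}(x_0,y_0)$ (using that $v_--\varphi$ attains a strict minimum), and then add a small constant $\eta>0$ to form $\varphi^\eta:=\varphi+\eta$, chosen small enough that the three strict inequalities persist on $\ubar{B_\epsilon(x_0,y_0)}$ and that $v\leq\varphi^\eta$ still holds on the annulus, while $\varphi^\eta(x_0,y_0)=v_-(x_0,y_0)+\eta>v_-(x_0,y_0)$. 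Define
\[v^\eta:=\begin{cases} v\vee\varphi^\eta &\text{on } \ubar{B_\epsilon(x_0,y_0)},\\ v &\text{on } \ubar{B_\epsilon(x_0,y_0)}^c.\end{cases}\]
If $v^\eta\in\mathcal{V}^-$, then $v^\eta(x_0,y_0)\geq\varphi^\eta(x_0,y_0)>v_-(x_0,y_0)$ contradicts the maximality of $v_-$. It remains to verify (SB1) (trivial, since $v^\eta=v$ near $\partial\mathcal{S}$) and (SB2) for $v^\eta$, which is the heart of the matter.

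For (SB2) the argument runs as follows. Fix a random initial condition $(\tau,\xi)$, an \emph{arbitrary} admissible control $(B,S)\in\mathscr{A}(\tau,\xi)$, and a stopping time $\rho\in[\tau,\sigma^{\tau,\xi,B,S}]$. Let $\tau_1$ be the first time after $\tau$ that the controlled process $(X,Y)$ (under $(B,S)$) exits $B_{\epsilon/2}(x_0,y_0)$, capped at $\rho$; the key point is that on $\{\xi\in B_{\epsilon/2}(x_0,y_0)\}$ the process stays in $B_\epsilon(x_0,y_0)$ up to $\tau_1$ only if no large transaction occurs. Here is where the difficulty the authors flag in the introduction enters: a singular control can jump the process across the annulus in one step, landing outside $B_\epsilon(x_0,y_0)$, so It\^o's formula for $\varphi^\eta$ is not directly applicable on $[\tau,\tau_1]$. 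I would handle this by the two-step splitting described in the introduction: decompose each transaction jump that would leave $B_\epsilon(x_0,y_0)$ into a first jump to the point where the transaction ray meets $\partial B_{\epsilon/2}(x_0,y_0)$ (using that $\varphi^\eta$'s gradient constraints $-(1-\mu)\varphi^\eta_x+\varphi^\eta_y<0$ and $\varphi^\eta_x-(1-\lambda)\varphi^\eta_y<0$ make $\varphi^\eta$ increase along buy and sell directions inside $B_\epsilon$, so this partial jump only raises $\varphi^\eta$) followed by the remainder, redefining $\tau_1$ to be the time of this intermediate landing. On $[\tau,\tau_1)$ between transactions the process is a pure diffusion inside $B_\epsilon(x_0,y_0)$; applying It\^o to $\varphi^\eta$, the $dt$-drift is $-\mathcal{L}\varphi^\eta<0$, so combined with the transaction contributions (each $\geq 0$ by the gradient inequalities) and with killing at $\tau_d$ (where $\varphi^\eta(\bfDelta)=0$), taking $\mathcal{G}_\tau$-conditional expectation yields $\mathbb{E}[\varphi^\eta(X_{\rho\wedge\tau_1},Y_{\rho\wedge\tau_1})\mathbf 1_A\,|\,\mathcal{G}_\tau]\geq\mathbf 1_A\varphi^\eta(\xi)$ on the set $A:=\{\xi\in B_{\epsilon/2}(x_0,y_0),\ \varphi^\eta(\xi)>v(\xi)\}$. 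Since $v^\eta\geq\varphi^\eta$ on $B_\epsilon(x_0,y_0)$ and $\xi_1:=(X_{\tau_1},Y_{\tau_1})\in\partial B_{\epsilon/2}(x_0,y_0)\cup\{\bfDelta\}$ where $v^\eta\geq\varphi^\eta$ as well, this gives the submartingale inequality on $A$ up to $\tau_1$; on $A^c$ one uses that $v\in\mathcal{V}^-$ already satisfies (SB2) for the control $(B,S)$ and $v=v^\eta$ outside $B_{\epsilon/2}$; and after $\tau_1$ one applies (SB2) for $v$ with the random initial condition $(\tau_1,\xi_1)$ and the same control $(B,S)$ restarted, replacing $v$ by $v^\eta$ using $v^\eta\geq v$ globally and $v^\eta(\xi_1)=v(\xi_1)$ since $\xi_1\notin B_{\epsilon/2}(x_0,y_0)$. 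Adding the pieces along the partition $\{\rho<\tau_1\}\cup\{\rho\geq\tau_1\}$ and using the tower property gives $\mathbb{E}[v^\eta(X_\rho,Y_\rho)\,|\,\mathcal{G}_\tau]\geq v^\eta(\xi)$, completing (SB2) and the proof. I expect the main obstacle to be making the jump-splitting rigorous — in particular verifying that redefining $\tau_1$ to be the intermediate landing time is consistent with admissibility (Lemma~\ref{lemma:admissibility}.ii) and that along the partial transaction ray $\varphi^\eta$ is genuinely nondecreasing, which requires the gradient inequalities to hold on all of $B_\epsilon(x_0,y_0)$, not just at $(x_0,y_0)$; since there is no case split here, all three must be controlled simultaneously, which is exactly what the strict-minimum contradiction hypothesis provides.
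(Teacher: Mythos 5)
Your overall approach matches the paper's: boundary inequalities via $\lbar{\psi}\in\mathcal{V}^-$, the contradiction argument without a case split (all three terms of the max must be simultaneously strictly negative), the lifted test function $\varphi^\eta=\varphi+\eta$, the patched function $v^\eta=v\vee\varphi^\eta$ on $\ubar{B_\epsilon(x_0,y_0)}$, and crucially the two-step splitting of the jump at $\tau_1$ through an intermediate landing point on $\partial B_{\epsilon/2}(x_0,y_0)$. Two points deserve care, and the second is a genuine gap rather than a typo.

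First, a sign slip: since $\mathcal{L}\varphi^\eta<0$ on $\ubar{B_\epsilon(x_0,y_0)}$, the $dt$-drift in It\^o's formula is $-\mathcal{L}\varphi^\eta>0$, not $<0$ as you wrote; it is precisely this positivity that, together with the nonnegative jump and $dB^c,dS^c$ contributions, makes $\varphi^\eta$ of the modified process a submartingale on $[\tau,\tau_1]$. Second, and more substantially, your handling of the post-$\tau_1$ piece is imprecise. You take $\xi_1$ to be the intermediate point on $\partial B_{\epsilon/2}(x_0,y_0)$ and then say you apply (SB2) for $v$ with random initial condition $(\tau_1,\xi_1)$ and ``the same control $(B,S)$ restarted.'' But naively restarting $(B,S)$ (taking its increment from $\tau_{1}-$) would reapply the full jump $(\triangle B_{\tau_1},\triangle S_{\tau_1})$ on top of the partial jump already used to reach the intermediate point, landing the process at the wrong state and breaking the identification with the original controlled path. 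The paper handles this by introducing two auxiliary controls: $(B^2,S^2)$, a single jump from the intermediate point $\xi'_1$ to the actual post-jump state $\xi_1$ followed by inactivity, used only to deduce $v(\xi'_1)\leq v(\xi_1)$ from (SB2); and $(B^3_t,S^3_t):=(B_t,S_t)-1_{\{t\geq\tau_1\}}(\triangle B_{\tau_1},\triangle S_{\tau_1})$, the original control with the jump at $\tau_1$ removed, used with random initial condition $(\tau_1,\xi_1)$. The explicit jump-removal is necessary because a random initial condition already permits a transaction at the initial time and $\xi_1$ already encodes that transaction; without it the ``restarted'' process does not coincide with $(X^{\tau,\xi,B,S},Y^{\tau,\xi,B,S})$ on $[\tau_1,\infty)$, and the submartingale pieces cannot be glued.
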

\begin{proof}
The boundary inequalities are satisfied because $v_-\geq \lbar{\psi}$ by Lemma \ref{lemma:lbarpsi is a ssubsoln}.\footnote{In fact, equalities hold for $v_-$ on the boundary; the reverse inequalities holds because (SB1) is preserved under pointwise maximum.} 
To show interior viscosity supersolution property, let $(x_0, y_0)\in \mathcal{S}$ and $\varphi\in C^2(\mathcal{S})$ be a test function such that $v_--\varphi$ attains a strict minimum of zero at $(x_0, y_0)$. We need to show
\[\max\left\{\mathcal{L}\varphi, -(1-\mu)\varphi_x+\varphi_y, \varphi_x-(1-\lambda)\varphi_y\right\}(x_0, y_0)\geq 0.\]
Assume on the contrary that 
\[\max\left\{\mathcal{L}\varphi, -(1-\mu)\varphi_x+\varphi_y, \varphi_x-(1-\lambda)\varphi_y\right\}(x_0, y_0)< 0.\]
Similar to the proof of Proposition \ref{prop:v+ is a vsubsoln}, we can find $0<\epsilon<1$, $\eta>0$ and $v\in \mathcal{V}^-$ such that $\varphi^\eta:=\varphi+\eta$ satisfies
\begin{equation}\label{vsupsoln_eq1}
\max\left\{\mathcal{L}\varphi^\eta, -(1-\mu)\varphi^\eta_x+\varphi^\eta_y, \varphi^\eta_x-(1-\lambda)\varphi^\eta_y\right\}< 0 \quad \text{on}\quad \ubar{B_\epsilon(x_0,y_0)},
\end{equation}
\begin{equation}\label{vsupsoln_eq2}
\varphi^\eta \leq v\quad \text{on} \quad \ubar{B_\epsilon(x_0, y_0)}\backslash B_{\epsilon/2}(x_0, y_0),
\end{equation}
and
\begin{equation*}
 \varphi^\eta(x_0, y_0)>v_-(x_0, y_0).
\end{equation*}
The technique for constructing the lifting function $\varphi^\eta$ is classical and similar to the stochastic supersolution case. So we skip the details. Define
\[v^\eta:=\begin{cases}
v\vee \varphi^\eta &\text{ on } \ubar{B_\epsilon(x_0,y_0)},\\
v &\text{ on } \ubar{B_\epsilon(x_0,y_0)}^c.
\end{cases}\]
It suffices to show $v^\eta\in\mathcal{V}^-$. And the only nontrivial part is to check $v^\eta$ satisfies (SB2). 

Let $(\tau,\xi)$ be any random initial condition, $(B,S)$ be any $(\tau,\xi)$-admissible control and $\rho\in[\tau, \sigma^{\tau,\xi,B,S}]$ be any $\mathbb{G}$-stopping time. Let
\[A:=\{\xi\in B_{\epsilon/2}(x_0,y_0)\}\cap\{\varphi^\eta(\xi)>v(\xi)\}\in\mathcal{G}_\tau.\]
Let
\[\tau_1:=\inf\{t\in[\tau,\sigma^{\tau,\xi,B,S}]: (X^{\tau,\xi,B,S}_t, Y_t^{\tau,\xi,B,S})\notin B_{\epsilon/2}(x_0, y_0)\}\]
and
\[\xi_1:=(X^{\tau,\xi,B,S}_{\tau_1}, Y_{\tau_1}^{\tau,\xi,B,S})\in\mathcal{G}_{\tau_1}.\]
In the event $A$, because of a possible jump transaction at time $\tau_1$, $\xi_1$ may not be on $\partial B_{\epsilon/2}(x_0,y_0)\cup\{\bfDelta\}$. This will bring some problem since \eqref{vsupsoln_eq1} is only valid locally. To overcome this issue, we define an intermediate position $\xi'_1$ as follows: let $\xi_{1-}:=(X^{\tau,\xi,B,S}_{\tau_1-}, Y_{\tau_1-}^{\tau,\xi,B,S})$. We have $\xi_{1-}\in \ubar{B_{\epsilon/2}(x_0,y_0)}$ on $A$. Define 
\[\xi'_1:=1_{A\cap\{\tau_1<\tau_d\}}\left(1_{\{\triangle B_{\tau_1}>0\}}\mathfrak{b}(\xi_{1-})+1_{\{\triangle S_{\tau_1}> 0\}}\mathfrak{s}(\xi_{1-})\right)+1_{A^c\cup\{\tau_1=\tau_d\}}\xi_1\in\mathcal{G}_{\tau_1},\]
where $\mathfrak{b}, \mathfrak{s}$ are the functions introduced in cases (ii) and (iii) of the proof of Proposition \ref{prop:v+ is a vsubsoln}. On $A\cap\{\tau_1<\tau_d\}$, $\xi'_1$ is the intersection of $\partial B_{\epsilon/2}(x_0, y_0)$ and the line segment connecting $\xi_{1-}$ and $\xi_1$. 
Also define $(B^1,S^1)$ by
\begin{align*}
(\triangle B^1_{\tau_1}, \triangle S^1_{\tau_1}):=&1_{A\cap\{\tau_1<\tau_d\}}\left(1_{\{\triangle B_{\tau_1}> 0\}}(\xi^0_{1-}-\mathfrak{b}^0(\xi_{1-}),0)+1_{\{\triangle S_{\tau_1}> 0\}}(0,\xi^1_{1-}-\mathfrak{s}^1(\xi_{1-}))\right)\\
&\quad +1_{A^c\cup\{\tau_1=\tau_d\}}(\triangle B_{\tau_1},\triangle S_{\tau_1})
\end{align*}
and
\[(B^1_t,S^1_t):=1_{\{t<\tau_1\}}(B_t, S_t)+1_{\{t\geq \tau_1\}}\left[(B_{\tau_1-}, S_{\tau_1-})+(\triangle B^1_{\tau_1}, \triangle S^1_{\tau_1})\right].\]
That is, $(B^1,S^1)$ agrees with $(B,S)$ before time $\tau_1$, but at time $\tau_1$, the corresponding controlled process only jumps to $\xi'_1$ instead of $\xi_1$. We have $(B^1,S^1)\in\mathscr{A}_0$ and $(X^{\tau,\xi,B^1,S^1}_t, Y^{\tau,\xi,B^1,S^1}_t)\in \ubar{B_{\epsilon/2}(x_0,y_0)}\cup\{\bfDelta\}$ for all $t\in[\tau,\tau_1]$ on $A$.
Apply generalized It\^{o}'s formula to the RCLL semimartingale $\varphi^\eta(X^{\tau,\xi,B^1,S^1},Y^{\tau,\xi,B^1,S^1})$ on $A$,
we get
\begin{align*}
\varphi^\eta(X^{\tau,\xi,B^1,S^1}_{\rho\wedge\tau_1},Y^{\tau,\xi,B^1,S^1}_{\rho\wedge\tau_1})&=\varphi^\eta(X^{\tau,\xi,B^1,S^1}_{\tau},Y^{\tau,\xi,B^1,S^1}_{\tau})+\int_\tau^{\rho\wedge\tau_1} -\mathcal{L}\varphi^\eta(X^{\tau,\xi,B^1,S^1}_t,Y^{\tau,\xi,B^1,S^1}_t)dt\\
&\quad +\int_\tau^{\rho\wedge \tau_1}(\varphi^\eta)'(X^{\tau,\xi,B^1,S^1}_t, Y^{\tau,\xi,B^1,S^1}_t)\sigma Y^{\tau,\xi,B^1,S^1}_t dW_t\\
&\quad +\int_\tau^{\rho\wedge \tau_1}[-\varphi^\eta_x+(1-\lambda)\varphi^\eta_y](X^{\tau,\xi,B^1,S^1}_t, Y^{\tau,\xi,B^1,S^1}_t)dB^c_t\\
&\quad +\int_\tau^{\rho\wedge \tau_1}[(1-\mu)\varphi^\eta_x-\varphi^\eta_y](X^{\tau,\xi,B^1,S^1}_t, Y^{\tau,\xi,B^1,S^1}_t)dS^c_t\\
&\quad +\int_\tau^{\rho\wedge \tau_1}[\varphi^\eta(\bfDelta)-\varphi^\eta(X^{\tau,\xi,B^1,S^1}_{t-}, Y^{\tau,\xi,B^1,S^1}_{t-})]d(N_t-\beta t)\\
&\quad+\sum_{\substack{\tau\leq t\leq \rho\wedge \tau_1\\ t<\tau_d}} \varphi^\eta(X^{\tau,\xi,B^1,S^1}_{t}, Y^{\tau,\xi,B^1,S^1}_{t})-\varphi^\eta(X^{\tau,\xi,B^1,S^1}_{t-}, Y^{\tau,\xi,B^1,S^1}_{t-})
\end{align*}
where $B^c, S^c$ denote the continuous part of $B, S$. By \eqref{vsupsoln_eq1}, the $dt$, $dB^c$ and $dS^c$ integrals are non-negative. The $dW$ integral and the integral with respect to the compensated Poisson process vanish if we take $\mathcal{G}_\tau$-conditional expectation. We now analyze the last term which represents contribution from jump transactions. Similar to case (ii) of the proof of Proposition \ref{prop:v+ is a vsubsoln} (see \eqref{ssupsoln_ii_eq2}), we can use \eqref{vsupsoln_eq1} and Mean Value Theorem to deduce
\[\varphi^\eta(x-h,y+(1-\lambda)h)\geq \varphi^\eta(x,y),\]
and
\[\varphi^\eta(x+(1-\mu)h',y-h')\geq \varphi^\eta(x,y).\]
for all $(x,y)\in B_{\epsilon}(x_0,y_0)$ and $h,h'>0$ such that $(x-h,y+(1-\lambda)h), (x+(1-\mu)h',y-h')\in B_{\epsilon}(x_0,y_0)$. It follows that on the set $A$ and for $t\in[\tau, \tau_1]\backslash\{\tau_d\}$, if $\triangle B^1_t>0$, then
\begin{align*}
\varphi^\eta(X^{\tau,\xi,B^1,S^1}_{t}, Y^{\tau,\xi,B^1,S^1}_{t})&= \varphi^\eta(X^{\tau,\xi,B^1,S^1}_{t-}-\triangle B^1_t, Y^{\tau,\xi,B^1,S^1}_{t-}+(1-\lambda)\triangle B^1_t)\\
&\geq \varphi^\eta(X^{\tau,\xi,B^1,S^1}_{t-}, Y^{\tau,\xi,B^1,S^1}_{t-}).
\end{align*}
If $\triangle S^1_t>0$, then
\begin{align*}
\varphi^\eta(X^{\tau,\xi,B^1,S^1}_{t}, Y^{\tau,\xi,B^1,S^1}_{t})&=\varphi^\eta(X^{\tau,\xi,B^1,S^1}_{t-}+(1-\mu)\triangle S^1_t, Y^{\tau,\xi,B^1,S^1}_{t-}-\triangle S^1_t)\\
&\geq \varphi^\eta(X^{\tau,\xi,B^1,S^1}_{t-}, Y^{\tau,\xi,B^1,S^1}_{t-}).
\end{align*}
Since $\triangle B^1_t$ and $\triangle S^1_t$ are not positive at the same time (see the definition of $\mathscr{A}_0$), each summand in the last term is non-negative. Putting everything together, we obtain by taking $\mathcal{G}_\tau$-conditional expectation of the expression given by It\^{o}'s formula that
\begin{equation*}
\mathbb{E}[1_A\varphi^\eta(X^{\tau,\xi,B^1,S^1}_{\rho\wedge\tau_1},Y^{\tau,\xi,B^1,S^1}_{\rho\wedge\tau_1})|\mathcal{G}_\tau]\geq 1_A\varphi^\eta(X^{\tau,\xi,B^1,S^1}_{\tau},Y^{\tau,\xi,B^1,S^1}_{\tau}).
\end{equation*}
Again, we use that $\varphi^\eta$ is non-decreasing if we move northwest along the vector $(-1,1-\lambda)$ and southeast along the vector $(1-\mu,-1)$ inside the ball $B_\epsilon(x_0,y_0)$ to bound the right hand side from below by
\[1_{A\cap\{\tau<\tau_d\}}\varphi^\eta (\xi)+1_{A\cap\{\tau=\tau_d\}}\varphi^\eta(\bfDelta)=1_A\varphi^\eta(\xi)=1_A v^\eta(\xi).\]
For the left hand side, we use $v^\eta\geq \varphi^\eta$ in $B_\epsilon(x_0,y_0)$ and that $(B^1,S^1)=(B,S)$ before $\tau_1$ to obtain
\begin{align*}
1_A\varphi^\eta(X^{\tau,\xi,B^1,S^1}_{\rho\wedge\tau_1},Y^{\tau,\xi,B^1,S^1}_{\rho\wedge\tau_1})&\leq 1_{A}v^\eta(X^{\tau,\xi,B^1,S^1}_{\rho\wedge\tau_1},Y^{\tau,\xi,B^1,S^1}_{\rho\wedge\tau_1})\\
&=1_{A\cap\{\rho<\tau_1\}}v^\eta(X^{\tau,\xi,B,S}_{\rho},Y^{\tau,\xi,B,S}_{\rho})+1_{A\cap\{\rho\geq\tau_1\}}v^\eta(\xi'_1).
\end{align*}
Hence
\begin{equation}\label{vsupsoln_eq3}
\mathbb{E}[1_{A\cap\{\rho<\tau_1\}}v^\eta(X^{\tau,\xi,B,S}_{\rho},Y^{\tau,\xi,B,S}_{\rho})+1_{A\cap\{\rho\geq\tau_1\}}v^\eta(\xi'_1)|\mathcal{G}_\tau]\geq 1_A v^\eta(\xi).
\end{equation}
Define
\[(B^2_t,S^2_t):=1_{\{t\geq \tau_1\}}[(B_{\tau_1},S_{\tau_1})-(B^1_{\tau_1},S^1_{\tau_1})].\]
Starting with the random initial condition $(\tau_1,\xi'_1)$, $(B^2,S^2)$ immediately brings the state process from $\xi'_1$ back to $\xi_1$ and stays inactive afterwards. It is easy to see that $(X^{\tau_1,\xi'_1,B^2,S^2},Y^{\tau_1,\xi'_1,B^2,S^2})$ either exit $\mathcal{S}$ at time $\tau_1$ with exit position $\xi_1$, or at a later time when the control is inactive so that the exit is caused by diffusion or death. In both cases, the exit position belongs to $\partial\mathcal{S}\cup\{\bfDelta\}$. So $(B^2, S^2)\in \mathscr{A}(\tau_1,\xi'_1)$. Using the submartingale property of $v(X^{\tau_1,\xi'_1,B^2,S^2}, Y^{\tau_1,\xi'_1,B^2,S^2})$, we have
\[v^\eta(\xi_1)=v(\xi_1)=\mathbb{E}[v(X^{\tau_1,\xi'_1,B^2,S^2}_{\tau_1},Y^{\tau_1,\xi'_1,B^2,S^2}_{\tau_1})|\mathcal{G}_{\tau_1}]\geq v(\xi'_1)=v^\eta(\xi'_1),\]
where the first and the last equalities hold because $\xi_1,\xi'_1\notin B_{\epsilon/2}(x_0,y_0)$. \eqref{vsupsoln_eq3} then implies
\begin{equation}\label{vsupsoln_eq3b}
\mathbb{E}[1_{A\cap\{\rho<\tau_1\}}v^\eta(X^{\tau,\xi,B,S}_{\rho},Y^{\tau,\xi,B,S}_{\rho})+1_{A\cap\{\rho\geq\tau_1\}}v^\eta(\xi_1)|\mathcal{G}_\tau]\geq 1_A v^\eta(\xi).
\end{equation}

On the set $A^c$, we use the submartingale property (SB2) of $v(X^{\tau,\xi,B,S}, Y^{\tau,\xi,B,S})$ to get
\begin{equation*}
\mathbb{E}[1_{A^c}v^\eta(X^{\tau,\xi,B,S}_{\rho\wedge\tau_1},Y^{\tau,\xi,B,S}_{\rho\wedge\tau_1})|\mathcal{G}_\tau]\geq\mathbb{E}[1_{A^c}v(X^{\tau,\xi,B,S}_{\rho\wedge\tau_1},Y^{\tau,\xi,B,S}_{\rho\wedge\tau_1})|\mathcal{G}_\tau]\geq 1_{A^c}v(\xi)=1_{A^c}v^\eta(\xi),
\end{equation*}
or
\begin{equation}\label{vsupsoln_eq4}
\mathbb{E}[1_{A^c\cap\{\rho<\tau_1\}}v^\eta(X^{\tau,\xi,B,S}_{\rho},Y^{\tau,\xi,B,S}_{\rho})+1_{A^c\cap\{\rho\geq\tau_1\}}v^\eta(\xi_1)|\mathcal{G}_\tau]\geq1_{A^c}v^\eta(\xi).
\end{equation}
Adding \eqref{vsupsoln_eq3b} and \eqref{vsupsoln_eq4} yields
\begin{equation}\label{vsupsoln_eq5}
\mathbb{E}[1_{\{\rho<\tau_1\}}v^\eta(X^{\tau,\xi,B,S}_{\rho},Y^{\tau,\xi,B,S}_{\rho})+1_{\{\rho\geq \tau_1\}}v^\eta(\xi_1)|\mathcal{G}_\tau]\geq v^\eta(\xi).
\end{equation}
Let 
\[(B^3_t,S^3_t):=(B_t,S_t)-1_{\{t\geq \tau_1\}}(\triangle B_{\tau_1}, \triangle S_{\tau_1})\]
be the same control as $(B,S)$, but with any jump transaction at time $\tau_1$ removed. We have 
\begin{equation}\label{vsupsoln_eq5.5}
(X^{\tau,\xi,B,S}_t, Y^{\tau,\xi,B,S}_t)=(X^{\tau_1,\xi_1, B^3, S^3}_t, Y^{\tau_1,\xi_1, B^3, S^3}_t) \quad \forall\, t\geq \tau_1.
\end{equation}
The reason for introducing another control is because our random initial condition allows a jump at initial time. Since $\xi_1$ already includes the possible jump transactions specified by $(B,S)$ at time $\tau_1$, we want to avoid doing the same transaction again when using $(\tau_1, \xi_1)$ as the new random initial condition. That is, $(B^3, S^3)$ is defined to make \eqref{vsupsoln_eq5.5} hold. To see $(B^3, S^3)\in\mathscr{A}(\tau_1,\xi_1)$, first notice that $\sigma^{\tau,\xi,B,S}\geq \tau_1$ by the definition of $\tau_1$. \eqref{vsupsoln_eq5.5} then implies $\sigma^{\tau_1,\xi_1,B^3,S^3}=\sigma^{\tau,\xi,B,S}$. Thus,
\[(X^{\tau_1,\xi_1, B^3, S^3}_{\sigma^{\tau_1,\xi_1,B^3,S^3}}, Y^{\tau_1,\xi_1, B^3, S^3}_{\sigma^{\tau_1,\xi_1,B^3,S^3}})=(X^{\tau,\xi, B, S}_{\sigma^{\tau,\xi,B,S}}, Y^{\tau,\xi, B, S}_{\sigma^{\tau,\xi,B,S}})\in\partial\mathcal{S}\cup\{\bfDelta\}\]
by the $(\tau,\xi)$-admissibility of $(B,S)$. The submartingale property (SB2) of $v(X^{\tau_1,\xi_{1},B^3,S^3}, Y^{\tau_1,\xi_{1},B^3,S^3})$ (applied to the stopping time $\rho\vee\tau_1$) implies
\begin{equation*}
\begin{aligned}
\mathbb{E}[1_{\{\rho\geq \tau_1\}}v^\eta(X^{\tau,\xi,B,S}_\rho, Y^{\tau,\xi,B,S}_\rho)|\mathcal{G}_{\tau_1}]&=\mathbb{E}[1_{\{\rho\geq \tau_1\}}v^\eta(X^{\tau_1,\xi_1,B^3,S^3}_{\rho},Y^{\tau_1,\xi_1,B^3,S^3}_{\rho})|\mathcal{G}_{\tau_1}]\\
&\geq\mathbb{E}[1_{\{\rho\geq \tau_1\}}v(X^{\tau_1,\xi_1,B^3,S^3}_{\rho},Y^{\tau_1,\xi_1,B^3,S^3}_{\rho})|\mathcal{G}_{\tau_1}]\\
&\geq 1_{\{\rho\geq \tau_1\}}v(\xi_1)=1_{\{\rho\geq \tau_1\}}v^\eta(\xi_1)
\end{aligned}
\end{equation*}
Taking $\mathcal{G}_\tau$-conditional expectation, we get
\begin{equation}\label{vsupsoln_eq6}
\mathbb{E}[1_{\{\rho\geq \tau_1\}}v^\eta(X^{\tau,\xi,B,S}_\rho, Y^{\tau,\xi,B,S}_\rho)|\mathcal{G}_{\tau}]\geq \mathbb{E}[1_{\{\rho\geq \tau_1\}}v^\eta(\xi_1)|\mathcal{G}_\tau].
\end{equation}
Adding \eqref{vsupsoln_eq5} and \eqref{vsupsoln_eq6}, we get
\begin{equation*}
\mathbb{E}[v^\eta(X^{\tau,\xi,B,S}_\rho, Y^{\tau,\xi,B,S}_\rho)|\mathcal{G}_\tau]\geq v^\eta(\xi).
\end{equation*}
This completes the verification of (SB2) for $v^\eta$, and hence of the viscosity supersolution property of $v_-$.
\end{proof}

\section{Comparison Principle}\label{sec:comparison}

A comparison principle can be established following the idea of \cite{Kabanov04}. The key is to show the existence of a strict subsolution which is then added to the penalty term when applying the technique of doubling of variables. We give a proof here for the sake of completeness.

\begin{lemma}\label{Lyapunov_function}
There exists a strict subsolution $\ell$ of \eqref{HJB} satisfying
\begin{itemize}
\item[(1)] $\ell\in C^2(\ubar{\mathcal{S}})$ and $\ell<0$;
\item[(2)] $\ell(x,y)\rightarrow -\infty$ as $\|(x,y)\|\rightarrow \infty$ in $\ubar{\mathcal{S}}$.\footnote{The function $\ell$ is referred to as a Lyapunov function in \cite{Kabanov04}.
}
\end{itemize}
\end{lemma}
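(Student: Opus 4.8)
The plan is to construct $\ell$ explicitly, following the idea of \cite{Kabanov04}. Fix any $k\in(1-\mu,\tfrac1{1-\lambda})$ — the interval is nonempty because $\mu,\lambda\in(0,1)$ — and set
\[
\ell(x,y):=-\bigl(x+ky-b+1\bigr)^{p},\qquad(x,y)\in\ubar{\mathcal{S}},
\]
where $p\in(0,1)$ is a constant to be fixed small at the end; write $w:=x+ky-b+1$. The key elementary observation is that, since $k$ lies strictly inside $[1-\mu,\tfrac1{1-\lambda}]$,
\[
x+ky-L(x,y)=\bigl(k-(1-\mu)\bigr)y^{+}+\Bigl(\tfrac1{1-\lambda}-k\Bigr)y^{-}\ \ge\ c_{1}|y|,\qquad c_{1}:=\min\Bigl\{k-(1-\mu),\ \tfrac1{1-\lambda}-k\Bigr\}>0,
\]
so that, using $L(x,y)\ge b$ on $\ubar{\mathcal{S}}$, one has $w\ge1$ and the a priori bound $|y|\le C_{0}w$ on $\ubar{\mathcal{S}}$ with $C_{0}:=1/c_{1}$. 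In particular the base of the power never vanishes, whence $\ell<0$ and $\ell\in C^{2}(\ubar{\mathcal{S}})$, which is (1). For (2): if $\|(x,y)\|\to\infty$ within $\ubar{\mathcal{S}}$ then $|y|\to\infty$ (since $|x|\le\max\{|b|,c/r\}+\tfrac1{1-\lambda}|y|$ there), so $w\ge c_{1}|y|+1\to\infty$ and $\ell\to-\infty$.

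It then remains to verify the three strict inequalities on $\ubar{\mathcal{S}}$. From $\ell_{x}=-pw^{p-1}$ and $\ell_{y}=-pkw^{p-1}$ one gets at once
\[
-(1-\mu)\ell_{x}+\ell_{y}=pw^{p-1}\bigl((1-\mu)-k\bigr)<0,\qquad
\ell_{x}-(1-\lambda)\ell_{y}=pw^{p-1}\bigl((1-\lambda)k-1\bigr)<0,
\]
both signs coming from $1-\mu<k<\tfrac1{1-\lambda}$. For the parabolic term, with $\ell_{yy}=-pk^{2}(p-1)w^{p-2}$,
\[
\mathcal{L}\ell=w^{p-2}\Bigl[-\beta w^{2}+p\,(rx-c+\alpha ky)\,w+\tfrac12\sigma^{2}pk^{2}(p-1)\,y^{2}\Bigr].
\]
Since $p<1$ the last summand is $\le0$; and substituting $x=w+(b-1)-ky$ gives $rx-c+\alpha ky=rw+r(b-1)-c+(\alpha-r)ky$, so that $|rx-c+\alpha ky|\le M_{1}w$ on $\ubar{\mathcal{S}}$ with $M_{1}:=r+(\alpha-r)kC_{0}+|r(b-1)-c|$ a constant independent of $p$ (one uses $w\ge1$ and $|y|\le C_{0}w$). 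Hence $\mathcal{L}\ell\le(pM_{1}-\beta)\,w^{p}<0$ as soon as $p<\beta/M_{1}$. Choosing $p\in(0,\min\{1,\beta/M_{1}\})$ therefore makes $\ell$ a strict classical subsolution of \eqref{HJB} with properties (1) and (2).

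The only genuine difficulty is pairing the right functional form with the a priori estimate $|y|\le C_{0}w$: it expresses that on the admissible strip the transverse coordinate $y$ grows no faster than the scalar $x+ky$, and it holds precisely because $k$ is kept strictly inside the cone interval $(1-\mu,\tfrac1{1-\lambda})$ — this is where the transaction-cost geometry is used. Once that bound is available everything is routine: the first-order drift $p(rx-c+\alpha ky)w$ is of order $pw^{2}$ and is absorbed by $-\beta w^{2}$ for small $p$, while the second-order term carries a favorable sign because $p<1$, so no delicate estimate on $\mathcal{L}$ is required.
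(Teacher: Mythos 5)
Your construction is correct, and it takes essentially the same ansatz as the paper — $\ell(x,y)=h(x+ky)$ with $h$ a small negative power of $x+ky-b+1$ and $k$ strictly inside $(1-\mu,\tfrac1{1-\lambda})$ — but it estimates $\mathcal{L}\ell$ by a genuinely different route. The paper keeps the second-order term $-\tfrac12\sigma^2 k^2 y^2 h''$ and completes the square against the drift $-(rx-c+\alpha ky)h'$, after bounding $rx-c+\alpha ky\le\theta|y|$ via the one-sided solvency constraint $x+(1-\mu)y^+-\tfrac{y^-}{1-\lambda}<c/r$; the negativity condition on $p$ then involves $\theta^2/(\sigma^2k^2)$. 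You instead discard the second-order term outright (it is $\le 0$ because $p<1$), derive the linear-growth bound $|rx-c+\alpha ky|\le M_1 w$ from the a priori estimate $|y|\le C_0 w$ (which needs only $L\ge b$ and the strict inclusion of $k$ in the cone interval), and let the $-\beta w^2$ term absorb the drift directly; the smallness condition is $p<\beta/M_1$. Both proofs are valid; yours is a bit leaner and, notably, uses only the lower solvency boundary $L\ge b$, not the upper bound $L<c/r$, so it would carry over to the unbounded region $\mathcal{S}_b$ unchanged, whereas the paper's drift estimate is tailored to the strip $\mathcal{S}_b\setminus\ubar{\mathcal{S}}_{c/r}$. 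The price you pay is a cruder constant ($M_1$ grows like $C_0=1/c_1$, blowing up as $k$ approaches the endpoints of the interval), but since $k$ is fixed this is immaterial.
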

\begin{proof}
Let $h(z):=-\frac{(z-b+1)^p}{p}$ with $0<p<1$. We have $h<0$, $h'<0$ and $h''>0$ in $(b-1,\infty)$. Let $1-\mu<k<\frac{1}{1-\lambda}$ and define $\ell(x,y):=h(x+ky)$. $\ell$ is well-defined since $x+ky\geq b$ for all $(x,y)\in\ubar{\mathcal{S}}$. Condition (1) is trivially satisfied. To see condition (2) holds, observe that for each $a>b$, $\{(x,y)\in\ubar{\mathcal{S}}: x+ky\leq a\}$ is a bounded subset of $\mathbb{R}^2$. Therefore if $\|(x,y)\|\rightarrow \infty$ in $\ubar{\mathcal{S}}$, then we must have $x+ky\rightarrow \infty$. It follows that $\ell(x,y)=h(x+ky)\rightarrow -\infty$. It remains to show $\ell$ is a strict subsolution of \eqref{HJB} under a suitable choice of $p$.

Let $(x,y)\in\mathcal{S}$. By our choice of $k$ and that $h'<0$, we readily obtain 
\[-(1-\mu)\ell_x+\ell_y=[-(1-\mu)+k]h'(x+ky)<0\]
and
\[\ell_x-(1-\lambda)\ell_y=[1-k(1-\lambda)]h'(x+ky)<0.\]
Let us now compute $\mathcal{L}\ell(x,y)$.
\begin{align*}
\mathcal{L}\ell(x,y)&=\beta \ell(x,y)-(rx-c)\ell_x(x,y)-\alpha y \ell_y(x,y)-\frac{1}{2}\sigma^2 y^2\ell_{yy}(x,y)\\
&=\beta h(x+ky)-(rx-c+\alpha k y)h'(x+ky)-\frac{1}{2}\sigma^2 y^2 k^2h''(x+ky).
\end{align*}
By definition of the solvency region $\mathcal{S}$, we have
\[x+(1-\mu)y<\frac{c}{r} \ \text{ if } \ y>0, \text{ and } \ x+\frac{y}{1-\lambda}<\frac{c}{r}\ \text{ if } \ y<0,\]
which implies
\[rx-c+\alpha k y\leq \frac{r|y|}{1-\lambda}+\alpha k |y|=\left(\frac{r}{1-\lambda}+\alpha k\right)|y|:=\theta |y|.\]
Using $h'(x+ky)<0$ and $h''(x+ky)>0$, we deduce
\begin{align*}
\mathcal{L}\ell(x,y)&\leq \beta h(x+ky)-\theta |y|h'(x+ky)-\frac{1}{2}\sigma^2 y^2 k^2h''(x+ky)\\
&= -\frac{1}{2}\left[\sigma^2 y^2 k^2h''(x+ky)+2\theta|y|h'(x+ky)+\frac{\theta^2 (h'(x+ky))^2}{\sigma^2k^2 h''(x+ky)}\right]\\
&\quad +\beta h(x+ky)+\frac{1}{2}\frac{\theta^2 (h'(x+ky))^2}{\sigma^2k^2 h''(x+ky)}\\
&\leq \left(\beta +\frac{1}{2}\frac{\theta^2}{\sigma^2k^2}\frac{(h')^2}{hh''}(x+ky)\right)h(x+ky)\\
&=\left(\beta -\frac{1}{2}\frac{\theta^2}{\sigma^2k^2}\frac{p}{1-p}\right)h(x+ky).
\end{align*}
Choose $p$ small such that $\beta>\frac{1}{2}\frac{\theta^2}{\sigma^2k^2}\frac{p}{1-p}$. We then have by negativity of $h$ that $\mathcal{L}\ell(x,y)<0$.
\end{proof}

\begin{prop}\label{prop:comparison}
Let $u, v$ be u.s.c. viscosity subsolution and l.s.c. viscosity supersolution of \eqref{HJB}, respectively. Suppose $u, v$ are bounded and $u\leq v$ on $\partial\mathcal{S}$, then $u\leq v$ in $\mathcal{S}$.
\end{prop}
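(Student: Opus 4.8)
The plan is the classical doubling-of-variables comparison argument; write points of $\overline{\mathcal{S}}$ as $z=(x,y)$. The only real difficulty is that $\mathcal{S}$ is unbounded and the drift $rx-c$, $\alpha y$ and diffusion $\sigma y$ in $\mathcal{L}$ grow, so I would first perturb the subsolution by the Lyapunov function $\ell$ of Lemma~\ref{Lyapunov_function}: its decay $\ell\to-\infty$ gives compactness of the maximizing sequences, and the strict signs of $\mathcal{L}\ell$ and of the two gradient expressions upgrade the perturbed function to a \emph{strict} subsolution, which is what makes the gradient branches of the variational inequality inactive. Concretely, suppose for contradiction that $\theta:=\sup_{\mathcal{S}}(u-v)>0$ (so $\sup_{\overline{\mathcal{S}}}(u-v)=\theta$ since $u\leq v$ on $\partial\mathcal{S}$), and for $\gamma\in(0,1)$ set $u^\gamma:=(1-\gamma)u+\gamma\ell$. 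Since $u$ is bounded, $\ell<0$, and $\ell(x,y)\to-\infty$ as $\|(x,y)\|\to\infty$ in $\overline{\mathcal{S}}$, the function $u^\gamma$ is u.s.c., bounded above, and tends to $-\infty$ at infinity; hence $M_\gamma:=\max_{\overline{\mathcal{S}}}(u^\gamma-v)$ is attained, and $\liminf_{\gamma\downarrow0}M_\gamma\geq\theta$ (evaluate $u^\gamma-v$ at a near-maximizer of $u-v$). I fix $\gamma>0$ small enough that $M_\gamma>\theta/2$ and $\gamma\sup_{\overline{\mathcal{S}}}(\ell-u)<\theta/2$ (note $\ell-u\to-\infty$, so this supremum is finite).

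Two facts are then recorded. (i) Because $\mathcal{L}$ and the gradient operators are linear and $\ell\in C^2$, every $(p,X)\in\overline{J}^{2,+}u^\gamma(z)$ has the form $(1-\gamma)(q,N)+\gamma(D\ell(z),D^2\ell(z))$ with $(q,N)\in\overline{J}^{2,+}u(z)$; hence, using the subsolution property of $u$ (the three quantities attached to $(q,N)$ are $\leq0$) together with $\mathcal{L}\ell(z)<0$, $-(1-\mu)\ell_x(z)+\ell_y(z)<0$, $\ell_x(z)-(1-\lambda)\ell_y(z)<0$ from Lemma~\ref{Lyapunov_function}, one gets for $z=(x,y)\in\mathcal{S}$ that
\[\beta u^\gamma(z)-(rx-c)p_1-\alpha y\,p_2-\tfrac12\sigma^2y^2X_{22}<0,\qquad -(1-\mu)p_1+p_2<0,\qquad p_1-(1-\lambda)p_2<0;\]
that is, $u^\gamma$ is a strict viscosity subsolution. (ii) Every maximizer $\hat z$ of $u^\gamma-v$ over $\overline{\mathcal{S}}$ lies in $\mathcal{S}$: if $\hat z\in\partial\mathcal{S}$, then $M_\gamma=(u-v)(\hat z)+\gamma(\ell-u)(\hat z)\leq 0+\gamma\sup_{\overline{\mathcal{S}}}(\ell-u)<\theta/2$, contradicting $M_\gamma>\theta/2$.

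Next I would double variables. For $\varepsilon>0$ let $(z_\varepsilon,z'_\varepsilon)=\bigl((x_\varepsilon,y_\varepsilon),(x'_\varepsilon,y'_\varepsilon)\bigr)$ maximize $\Phi_\varepsilon(z,z'):=u^\gamma(z)-v(z')-\tfrac1{2\varepsilon}|z-z'|^2$ over $\overline{\mathcal{S}}\times\overline{\mathcal{S}}$; the maximum is attained because $u^\gamma\to-\infty$ and, when $z$ stays bounded, the penalty forces $\Phi_\varepsilon\to-\infty$. The standard properties of penalization (cf.\ \cite[Lemma~3.1]{UsersGuide}) give that $(z_\varepsilon,z'_\varepsilon)$ stays in a fixed compact set, $\tfrac1{2\varepsilon}|z_\varepsilon-z'_\varepsilon|^2\to0$, and along a subsequence $z_\varepsilon,z'_\varepsilon\to\hat z$ for a maximizer $\hat z$ of $u^\gamma-v$; by (ii), $\hat z\in\mathcal{S}$, so $z_\varepsilon,z'_\varepsilon\in\mathcal{S}$ for $\varepsilon$ small. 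The theorem on sums \cite[Theorem~3.2]{UsersGuide} then furnishes symmetric matrices $X,Y$ with $(p_\varepsilon,X)\in\overline{J}^{2,+}u^\gamma(z_\varepsilon)$ and $(p_\varepsilon,Y)\in\overline{J}^{2,-}v(z'_\varepsilon)$, where $p_\varepsilon:=\tfrac1\varepsilon(z_\varepsilon-z'_\varepsilon)$ is the \emph{same} for both jets, and $\langle X\xi,\xi\rangle-\langle Y\eta,\eta\rangle\leq\tfrac3\varepsilon|\xi-\eta|^2$ for all $\xi,\eta\in\mathbb{R}^2$.

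Finally, by (i) the two first-order quantities at $(p_\varepsilon,X)$ are strictly negative, and since they depend only on $p_\varepsilon$, so are the identical quantities in the supersolution inequality for $v$ at $(p_\varepsilon,Y)$; hence the second-order branch there must be active, i.e.
\[\beta v(z'_\varepsilon)-(rx'_\varepsilon-c)(p_\varepsilon)_1-\alpha y'_\varepsilon(p_\varepsilon)_2-\tfrac12\sigma^2(y'_\varepsilon)^2Y_{22}\geq0.\]
Combining this with the strictly negative second-order quantity for $u^\gamma$ at $(p_\varepsilon,X)$, and using $p_\varepsilon=\tfrac1\varepsilon(z_\varepsilon-z'_\varepsilon)$ together with $y_\varepsilon^2X_{22}-(y'_\varepsilon)^2Y_{22}\leq\tfrac3\varepsilon(y_\varepsilon-y'_\varepsilon)^2$ (apply the matrix inequality with $\xi=(0,y_\varepsilon)$, $\eta=(0,y'_\varepsilon)$), I obtain
\[\beta\bigl(u^\gamma(z_\varepsilon)-v(z'_\varepsilon)\bigr)<\frac r\varepsilon(x_\varepsilon-x'_\varepsilon)^2+\frac\alpha\varepsilon(y_\varepsilon-y'_\varepsilon)^2+\frac{3\sigma^2}{2\varepsilon}(y_\varepsilon-y'_\varepsilon)^2\leq\frac C\varepsilon|z_\varepsilon-z'_\varepsilon|^2,\]
whose right-hand side tends to $0$; but the left-hand side is $\geq\beta M_\gamma$ (evaluate $\Phi_\varepsilon$ on the diagonal at $\hat z$). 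Letting $\varepsilon\downarrow0$ yields $\beta M_\gamma\leq0$, contradicting $M_\gamma>\theta/2>0$; therefore $\theta\leq0$, i.e.\ $u\leq v$ in $\mathcal{S}$. The main obstacle throughout is the unbounded domain combined with the growth of the coefficients of $\mathcal{L}$; this is exactly what Lemma~\ref{Lyapunov_function} is designed to handle (the growth of $\ell$ for compactness, the strict inequalities for the strictness of $u^\gamma$), and once the two gradient branches are ruled out, the remaining step is the routine doubling calculation on the second-order branch.
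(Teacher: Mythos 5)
Your argument is correct, and it organizes the Lyapunov-function trick somewhat differently from the paper, in a way that cleanly avoids the paper's three-case analysis. The paper defines $\Phi_\theta(x,y,x',y')=u(x,y)-v(x',y')-\frac{\theta}{2}|(x,y)-(x',y')|^2+\epsilon\,\ell(x,y)+\epsilon\,\ell(x',y')$, which amounts to doubling $u+\epsilon\ell$ against $v-\epsilon\ell$; when the jets are pulled back to $u$ and $v$, the gradients acquire different $\epsilon D\ell$ corrections, so $p_n\neq q_n$ and one has to argue separately in each of the three branches of the variational inequality, using the strict sign of the corresponding component of $\ell$ each time. You instead absorb $\ell$ into the subsolution via the convex combination $u^\gamma=(1-\gamma)u+\gamma\ell$, check once (your step (i)) that $u^\gamma$ is a \emph{strict} subsolution by linearity of the three operators, and then run the doubling on $u^\gamma(z)-v(z')-\frac{1}{2\varepsilon}|z-z'|^2$. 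Since $\ell$ no longer appears in the coupling term, Crandall--Ishii produces the \emph{same} gradient $p_\varepsilon=\frac1\varepsilon(z_\varepsilon-z'_\varepsilon)$ in both jets; the two first-order constraints depend only on $p_\varepsilon$, so strict negativity for $u^\gamma$ automatically kills those branches for $v$ as well, and only the second-order branch remains. That single observation replaces the paper's Cases~1 and~2. Your boundary exclusion (step (ii)) and the choice of $\gamma$ with $M_\gamma>\theta/2$ and $\gamma\sup(\ell-u)<\theta/2$ play the role of the paper's condition $\delta+2\epsilon\ell(x_0,y_0)>0$. The final estimate matches the paper's Case~3 calculation: the $r$ and $\alpha$ drift terms become $\frac1\varepsilon\cdot(\text{squared differences})$ because $p_\varepsilon$ is the quadratic gradient, the diffusion term is controlled by the Ishii matrix bound applied to $\xi=(0,y_\varepsilon)$, $\eta=(0,y'_\varepsilon)$, and the penalized sum $\frac1\varepsilon|z_\varepsilon-z'_\varepsilon|^2\to0$ finishes the contradiction. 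So you obtain the same conclusion with a tidier bookkeeping of the jets; the price is that one has to verify once that $J^{2,+}$ of a convex combination with a smooth function decomposes as you state, which is elementary. Both routes rely on exactly the same two features of Lemma~\ref{Lyapunov_function}: coercivity for compactness and the three strict inequalities for strictness.
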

\begin{proof}
Assume to the contrary that $\delta:=u(x_0,y_0)-v(x_0,y_0)>0$ for some $(x_0, y_0)\in\mathcal{S}$. Let $\ell$ be the strict classical subsolution given by Lemma \ref{Lyapunov_function}. Let $\epsilon$ be a small positive constant satisfying $\delta+2\epsilon \ell(x_0,y_0)>0$. For each $\theta>0$, define
\begin{align*}
\Phi_\theta(x,y,x',y'):=&u(x,y)-v(x',y')-\frac{\theta}{2}(|x-x'|^2+|y-y'|^2)+\epsilon \ell(x,y)+\epsilon \ell(x',y').
\end{align*}
Since $u(x,y)-v(x',y')$ is u.s.c. and bounded, and $\ell(x,y)\rightarrow -\infty$ as $\|(x,y)\|\rightarrow \infty$ in $\ubar{\mathcal{S}}$, there exists $(x_\theta, y_\theta), (x'_\theta, y'_\theta)$ lying in a compact subset of $\ubar{\mathcal{S}}$ such that
\[\sup_{(x,y),(x',y')\in\ubar{\mathcal{S}}}\Phi_\theta(x,y,x',y')=\Phi_\theta(x_\theta,y_\theta,x'_\theta,y'_\theta).\]
Compactness allows us to extract a sequence $\theta_n\rightarrow \infty$ such that $(x_n, y_n, x'_n, y'_n):=(x_{\theta_n}, y_{\theta_n}, x'_{\theta_n}, y'_{\theta_n})$ $\rightarrow (\hat{x}, \hat{y}, \hat{x}', \hat{y}')$ as $n\rightarrow \infty$. Clearly, we have
\begin{equation}\label{comparison_eq0}
\Phi_{\theta_n}(x_n, y_n,x'_n, y'_n)\geq \sup_{(x,y)\in \ubar{\mathcal{S}}} \Phi_0(x,y,x,y)\geq \delta+2\epsilon \ell(x_0,y_0)>0.
\end{equation}
It follows that
\begin{align*}
\frac{\theta_n}{2}(|x_n-x'_n|^2+|y_n-y'_n|^2)\leq & u(x_n,y_n)-v(x'_n,y'_n) +\epsilon \ell(x_n,y_n)+\epsilon \ell(x'_n,y'_n)-\sup_{(x,y)\in \ubar{\mathcal{S}}} \Phi_0(x,y,x,y).
\end{align*}
Since the right hand side is bounded from above and $\theta_n\rightarrow \infty$, we must have $|x_n-x'_n|^2+|y_n-y'_n|^2\rightarrow 0$, hence $(\hat{x}, \hat{y})=(\hat{x}', \hat{y}')$. This further implies by u.s.c. of $u-v$ that
\[0\leq \limsup_n \frac{\theta_n}{2}(|x_n-x'_n|^2+|y_n-y'_n|^2)\leq \Phi_0(\hat{x},\hat{y},\hat{x},\hat{y})-\sup_{(x,y)\in \ubar{\mathcal{S}}} \Phi_0(x,y,x,y)\leq 0.\]
So we conclude
\begin{equation}\label{comparison_eq1}
\lim_n \theta_n(|x_n-x'_n|^2+|y_n-y'_n|^2)=0,
\end{equation}
and
\begin{equation}\label{comparison_eq2}
\lim_n \Phi_{\theta_n}(x_n, y_n,x'_n, y'_n)=\Phi_0(\hat{x},\hat{y},\hat{x},\hat{y})=\sup_{(x,y)\in \ubar{\mathcal{S}}} \Phi_0(x,y,x,y)>0.
\end{equation}

Now, since $u\leq v$ on $\partial \mathcal{S}$ and $\ell\leq 0$, we have $\Phi_0(x,y,x,y)\leq 0$ for $(x,y)\in\partial \mathcal{S}$. In view of \eqref{comparison_eq2}, we have $(\hat{x}, \hat{y})\in \mathcal{S}$. So $(x_n, y_n), (x'_n, y'_n)\in\mathcal{S}$ for $n$ sufficiently large. By Crandall-Ishii's lemma, we can find matrices $A_n, B_n\in\mathbb{S}_2$ such that
\begin{equation}\label{comparison_eq3}
\left(\theta_n(x_n-x'_n), \theta_n(y_n-y'_n), A_n\right)\in \bar{J}^{2,+}_{\mathcal{S}}\big(u(x_n, y_n)+\epsilon \ell(x_n,y_n)\big),
\end{equation}
\begin{equation}\label{comparison_eq4}
\left(\theta_n(x_n-x'_n), \theta_n(y_n-y'_n), B_n\right)\in \bar{J}^{2,-}_{\mathcal{S}}\big(v(x'_n, y'_n)-\epsilon \ell(x'_n,y'_n)\big),
\end{equation}
and
\begin{equation*}
\begin{pmatrix}
A_n & 0\\ 0 & -B_n
\end{pmatrix}\leq 3\theta_n
\begin{pmatrix}
I & -I \\ -I & I
\end{pmatrix}.
\end{equation*}
where $\bar{J}^{2,+}_{\mathcal{S}}$ and $\bar{J}^{2,-}_{\mathcal{S}}$ denote the closure of the second order superjet and subjet, respectively. By Lemma 4.2.7 of \cite{Kabanov}, we have
\begin{equation}\label{comparison_eq5}
(y_n)^2A_{n,22}-(y'_n)^2B_{n,22}\leq 3\theta_n |y_n-y'_n|^2.
\end{equation}
Since $\ell$ is a $C^2(\mathcal{S})$ functions, we can rewrite \eqref{comparison_eq3} and \eqref{comparison_eq4} as
\begin{equation*}
(p_n, X_n)\in \bar{J}^{2,+}_{\mathcal{S}}u(x_n, y_n), \quad (q_n, Y_n)\in \bar{J}^{2,-}_{\mathcal{S}}v(x'_n, y'_n)
\end{equation*}
where
\begin{align*}
p_n&:=\theta_n(x_n-x'_n, y_n-y'_n)-\epsilon D\ell(x_n, y_n), \ X_n:=A_n-\epsilon D^2 \ell (x_n, y_n),\\
q_n&:=\theta_n(x_n-x'_n, y_n-y'_n)+\epsilon D\ell(x'_n, y'_n),\ Y_n:=B_n+\epsilon D^2 \ell(x'_n, y'_n).
\end{align*} 
By the semijets definition of viscosity solution, we have
\[\max\bigg\{\beta u(x_n,y_n)-(rx_n-c)p_{n,1}-\alpha y_n p_{n,2}-\frac{1}{2}\sigma^2y_n^2 X_{n,22}, -(1-\mu)p_{n,1}+p_{n,2}, p_{n,1}-(1-\lambda)p_{n,2}\bigg\}\leq 0\]
and
\[\max\bigg\{\beta v(x'_n,y'_n)-(rx'_n-c)q_{n,1}-\alpha y'_n q_{n,2}-\frac{1}{2}\sigma^2(y'_n)^2 Y_{n,22}, -(1-\mu)q_{n,1}+q_{n,2}, q_{n,1}-(1-\lambda)q_{n,2}\bigg\}\geq 0.\]
We consider three cases.

Case 1. $-(1-\mu)q_{n,1}+q_{n,2}\geq 0$ for infinitely many $n$'s. In this case,
\begin{align*}
0&\geq -(1-\mu)p_{n,1}+p_{n,2}-[-(1-\mu)q_{n,1}+q_{n,2}]\\
&= -\epsilon[-(1-\mu)\ell_x(x_n,y_n)+\ell_y(x_n,y_n)]-\epsilon[-(1-\mu)\ell_x(x'_n,y'_n)+\ell_y(x'_n,y'_n)].
\end{align*}
Letting $n\rightarrow \infty$ yields
\begin{align*}
0&\geq -2\epsilon [-(1-\mu)\ell_x(\hat{x},\hat{y})+\ell_y(\hat{x},\hat{y})],
\end{align*}
or
\[-(1-\mu)\ell_x(\hat{x},\hat{y})+\ell_y(\hat{x},\hat{y})\geq 0.\]
This is a contradiction to the strict subsolution property of $\ell$ in the sell region.

Case 2. $q_{n,1}-(1-\lambda)q_{n,2}\geq 0$ for infinitely many $n$'s. Similar to case 1, this leads to $\ell_x(\hat{x},\hat{y})-(1-\lambda)\ell_y(\hat{x},\hat{y})\geq 0$, contradicting the strict subsolution property of $\ell$ in the buy region.

Case 3. For $n$ sufficiently large, $\beta v(x'_n,y'_n)-(rx'_n-c)q_{n,1}-\alpha y'_n q_{n,2}-\frac{1}{2}\sigma^2(y'_n)^2 Y_{n,22}\geq 0$. In this case,
\begin{align*}
0&\leq \beta v(x'_n,y'_n)-(rx'_n-c)q_{n,1}-\alpha y'_n q_{n,2}-\frac{1}{2}\sigma^2(y'_n)^2 Y_{n,22}\\
&\qquad -\left[\beta u(x_n,y_n)-(rx_n-c)p_{n,1}-\alpha y_n p_{n,2}-\frac{1}{2}\sigma^2y_n^2 X_{n,22}\right]\\
&=-\beta\left[u(x_n,y_n)-v(x'_n,y'_n)\right]+\epsilon(\mathcal{L}\ell-\beta \ell)(x_n,y_n)+\epsilon(\mathcal{L}\ell-\beta \ell)(x'_n,y'_n)\\
&\qquad +r\theta_n(x_n-x'_n)^2+\alpha\theta_n(y_n-y'_n)^2+\frac{1}{2}\sigma^2\left[y^2_nA_{n,22}-(y'_n)^2B_{n,22}\right]\\
&\leq -\beta\left[u(x_n,y_n)-v(x'_n,y'_n)+\epsilon \ell(x_n,y_n)+\epsilon \ell(x'_n,y'_n) \right]\\
&\qquad +\left(r+\alpha+\frac{3}{2}\sigma^2\right)\theta_n(|x_n-x'_n|^2+|y_n-y'_n|^2)\\
&= -\beta \Phi_{\theta_n}(x_n,y_n,x'_n,y'_n)+\left(r+\alpha+\frac{3}{2}\sigma^2-\frac{\beta}{2}\right)\theta_n(|x_n-x'_n|^2+|y_n-y'_n|^2)\\
&\leq -\beta (\delta-2\epsilon\ell(x_0,y_0))+\left(r+\alpha+\frac{3}{2}\sigma^2-\frac{\beta}{2}\right)\theta_n(|x_n-x'_n|^2+|y_n-y'_n|^2).
\end{align*}
In the third step, we used the subsolution property of $\ell$ and \eqref{comparison_eq5}. In the fourth step, we used the definition of $\Phi_\theta$. In the last step, we used \eqref{comparison_eq0}.
Letting $n\rightarrow \infty$ and using \eqref{comparison_eq1}, we arrive at the contradiction $0\leq -\beta (\delta-2\epsilon\ell(x_0,y_0))<0$. The proof is complete.
\end{proof}

\noindent{\bf Proof of Theorem \ref{thm:main result}.}
By Remarks \ref{rmk:ssupsoln>=psi} and \ref{rmk:ssubsoln<=psi}, we have $v_-\leq\psi\leq v_+$. By Propositions \ref{prop:v+ is a vsubsoln} and \ref{prop:v- is a vsupsoln}, we know $v_+$ is a viscosity subsolution and $v_-$ is a viscosity supersolution of \eqref{HJB}. Moreover, $v_+\leq v_-$ on $\partial \mathcal{S}$. It is also clear that $v_+$ is u.s.c. and $v_-$ is l.s.c.. Comparison principle (Proposition \ref{prop:comparison}) then implies $v_+\leq v_-$. Therefore, $v_+=v_-=\psi$ is a continuous viscosity solution to the Dirichlet problem \eqref{HJB}, \eqref{BC}. Uniqueness also follows from the comparison principle.
\hfill{\qed}

\bibliographystyle{plain}
\bibliography{transaction_cost}{}

\end{document}